\newtheorem{theorem}{Theorem}[section]
\newtheorem{proposition}[theorem]{Proposition}
\newtheorem{lemma}[theorem]{Lemma}
\numberwithin{equation}{section}
\theoremstyle{remark}
\newtheorem{example}[theorem]{Example}
\newtheorem{remark}[theorem]{Remark}
\theoremstyle{definition}
\newtheorem{definition}[theorem]{Definition}
\DeclareMathOperator*{\Lie}{Lie}
\DeclareMathOperator*{\im}{Image}
\DeclareMathOperator*{\Ind}{Ind}
\DeclareMathOperator*{\reg}{reg}
\DeclareMathOperator*{\el}{ell}
\DeclareMathOperator*{\Irr}{Irr}
\DeclareMathOperator*{\Stab}{Stab}
\DeclareMathOperator*{\ic}{\textsf{inf.ch.}}
\DeclareMathOperator*{\disc}{disc}
\DeclareMathOperator*{\temp}{temp}
\DeclareMathOperator*{\ad}{ad}
\newcommand{\dsum}{\displaystyle\sum}
\DeclareMathOperator*{\Hom}{Hom} 
\DeclareMathOperator*{\Vol}{Vol}
\DeclareMathOperator*{\St}{\mathsf{St}}
\newcommand{\s}{\simeq}
\newcommand{\pp}{\mathfrak{p}}
\newcommand{\OO}{\mathcal{O}}
\newcommand{\ma}{\mathfrak{a}}
\newcommand{\CC}{\mathbb{C}}
\newcommand{\RR}{\mathbb{R}}
\newcommand{\QQ}{\mathbb{Q}}
\newcommand{\ZZ}{\mathbb{Z}}
\newcommand{\rr}{\mathbf{\textit{r}}}
\newcommand{\ii}{\mathbf{\textit{i}}}
\title[Generalized pseudo-coefficients]{Generalized pseudo-coefficients of discrete series of $p$-adic groups}
\author[Kwangho Choiy]{Kwangho Choiy} 
\address{Department of Mathematics,
Oklahoma State University,
Stillwater, OK 74078-1058,
U.S.A.}
\email{kwangho.choiy@okstate.edu}
\keywords{pseudo-coefficient, discrete series, $p$-adic group, trace Paley-Wiener theorem, orbital integral, character function, Plancherel formula}
\subjclass[2010]{Primary \textbf{22E50}; Secondary 22E35}
 \date{\today$~$-- \textsf{Published in Asian J. Math. 20 (2016), no. 5, pp.969-988.}}
\begin{document}
\maketitle  

\begin{abstract}
Let $G$ be a connected reductive group over a $p$-adic field $F$ of characteristic 0 and let $M$ be an $F$-Levi subgroup of $G.$
Given a discrete series representation $\sigma$ of $M(F),$
we prove that there exists a locally constant and compactly supported function on $M(F),$ which generalizes a pseudo-coefficient of $\sigma.$
This function satisfies similar properties to the pseudo-coefficient, and its lifting to $G(F)$ is applied to the Plancherel formula.
\end{abstract}
\setcounter{tocdepth}{2}
\tableofcontents

\section{Introduction} \label{intro}
Given a connected \textit{semi-simple} group $G$ over a $p$-adic field $F$ of characteristic 0 and a discrete series representation $\sigma$ of $G(F)$ (that is, $\sigma$ is an irreducible, admissible, unitary representation whose matrix coefficients are square-integrable modulo the center of $G(F)$), 
it is well-known (\cite{kaz}, \cite{ss}) that there exists a pseudo-coefficient $\phi_\sigma$ for $\sigma,$ which lies in the Hecke algebra $C^{\infty}_{c}(G(F))$ of locally constant and compactly supported functions on $G(F),$ satisfying the property that,
for any tempered representation $\pi$ of $G(F),$
\[
  trace(\pi)(\phi_{\sigma}) = \left\{ 
  \begin{array}{l l}
    1, & \: \text{if} \, \, \pi  \s  \sigma, \\

    0, & \: \text{otherwise.}\\
  \end{array} \right.
\]
Historically, Harish-Chandra already knew that the Howe conjecture implied the existence (see \cite[p.102]{clo89}).
Another proof of the existence relies on the trace Paley-Wiener theorem (\cite{bdk}).
As an explicit example of such a pseudo-coefficient, $\phi_{\St}$ with the Steinberg representation  $\St$ of $G(F)$ forms an Euler-Poincar\'e function in \cite{kot88}, and this result is generalized to any discrete series representation of $G(F)$ in \cite{ss}.

The pseudo-coefficient $\phi_{\sigma}$ satisfies interesting properties.
The orbital integral of $\phi_{\sigma}$ at a regular semi-simple element $\gamma$ of $G(F)$ vanishes unless $\gamma$ is elliptic (\cite{kaz}, \cite{ss}). 
This property, so-called cuspidality, is so crucial as to get various trace formulas (see \cite{harris11} for example). 
It is also applied to endoscopic transfers (\cite{ac89}, \cite{hs11}) and the limit multiplicities of discrete series representations in spaces of automorphic forms (\cite{clo86}, \cite{flm}). The orbital integral is also related to the Harish-Chandra character function of $\sigma$ (\cite{kaz}, \cite{ss}).

All work on pseudo-coefficients under the condition `semi-simplicity' can be extended to the case that $G(F)$ has compact center. This is possible because the group $\Psi(G)$ of unramified characters on $G(F)$ is finite (see Remark \ref{properties of M/M1}). 
One then chooses a regular function $\xi$ on the $\Psi(G)$-orbit of $\sigma$ such that $\xi(z)$ vanishes unless $z=\sigma.$
This $\xi$ thus gives the pseudo-coefficient $\phi_{\sigma}$ by means of the trace Paley-Wiener theorem (\cite{bdk}).

In the general case that the group $G(F)$ has non-compact center, however, $\Psi(G)$ is no longer finite. 
It follows that there exists no such $\xi$ as above.
For such a group, instead, one can find a function $\phi^{\omega}_{\sigma}$ in the Hecke algebra $C^{\infty}_{c}(G(F), \omega^{-1})$ of locally constant functions with compact support \textit{modulo the center} and transforming under the center by $\omega^{-1},$ where $\omega$ is the central character of $\sigma.$
The function $\phi^{\omega}_{\sigma}$ satisfies the property that, for any tempered representation $\pi$ of $G(F)$ whose central character is $\omega,$
\[
  trace(\pi)(\phi^{\omega}_{\sigma}) = \left\{ 
  \begin{array}{l l}
    1, & \: \text{if} \, \, \pi  \s  \sigma, \\

    0, & \: \text{otherwise.}\\
  \end{array} \right.
\]
We also say such a function $\phi^{\omega}_{\sigma}$ is a pseudo-coefficient of $\sigma$ (see \cite{clo89}, \cite[A.4]{dkv}).

The purpose of this paper is to study a locally constant and compactly supported function on $G(F)$ with non-compact center, which generalizes both pseudo-coefficients $\phi_\sigma \in C^{\infty}_{c}(G(F))$ for the case with compact center and $\phi^{\omega}_{\sigma} \in C^{\infty}_{c}(G(F), \omega^{-1})$ for the case with non-compact center. 
We work with an arbitrary $F$-Levi subgroup $M$ (possibly, $M=G$) of a connected reductive group $G$ over $F.$ 
Our generalized pseudo-coefficient is determined by a discrete series representation of $M(F)$ and a regular function on the $\Psi(G)$-orbit of $\sigma.$
It also satisfies the cuspidal property.
Furthermore, its projection to $C^{\infty}_{c}(M(F), \omega^{-1})$ is related to the pseudo-coefficient $\phi^{\omega}_{\sigma}$ and the Harish-Chandra character function of $\sigma.$
As an application, we lift it to a function in $C^{\infty}_{c}(G(F))$ and simplify the Plancherel formula.

We explain our results more precisely.
Let $\sigma$ be a discrete series representation  of $M(F)$ and let $\xi$ be a regular function on the $\Psi(M)$-orbit of $\sigma$ (denoted by $\Omega(\sigma)$). 
Using the trace Paley-Wiener theorem (\cite{bdk}), we prove that there exists a locally constant and compactly supported function $\phi_{\sigma, \xi}$ on $M(F)$ such that, for all tempered representations $\pi$ of $(M(F))$,
\[
  trace(\pi)(\phi_{\sigma, \xi}) = \left\{ 
  \begin{array}{l l}
    \xi(\pi), & \: \text{if} ~ \pi \s \sigma \otimes \psi ~ \text{for some} ~ \psi \in  \Psi(M), \\

    0, & \: \text{otherwise.}\\
  \end{array} \right.
\]
We call $\phi_{\sigma, \xi}$ \textit{a generalized pseudo-coefficient for $\sigma$ and $\xi$}. 
We remark that the regular function $\xi(z)$ cannot be chosen to be 0 for $z \neq \sigma,$ unless $M=G$ and $G(F)$ has compact center.
The reason is that $\Omega(\sigma)$ is isomorphic to the quotient of $(\CC^{\times})^d$ by a finite subgroup, the stabilizer ${\Stab}_{\Psi}(\sigma)$ of $\sigma$ in $\Psi(M),$ 
where $d$ is the dimension of the split component in the center $Z_M$ of $M$ (see Sections \ref{unram} and \ref{reg} for details). 
This part is different from the case with compact center. 
Furthermore, the fact that $\phi_{\sigma, \xi}$ has compact support on $M(F)$ makes a distinction from $\phi^{\omega}_{\sigma} \in C^{\infty}_{c}(M(F), \omega^{-1}).$

Despite such differences the generalized pseudo-coefficient $\phi_{\sigma, \xi}$ satisfies several properties similar to those pseudo-coefficients $\phi_{\sigma} $ and $\phi^{\omega}_{\sigma}.$
Given $\sigma$ and $\xi,$ the function $\phi_{\sigma, \xi}$ is uniquely determined modulo the $\CC$-subspace that is spanned by the functions of the form $m \mapsto (h(m)-h(xmx^{-1})),$ for all $h \in C^{\infty}_{c}(M(F))$ and $x \in M(F)$ (see Remark \ref{uniquely determined}).
The function also satisfies the cuspidal property (Proposition \ref{cuspidal property})
\[
\mathit{O}^{M(F)}_{\gamma}(\phi_{\sigma, \xi}) = 0 = \mathit{O}^{M(F)}_{\gamma}(\phi_{\sigma, \xi}^{\omega}),
\]
for any $\gamma$  regular semi-simple but not elliptic in $M(F),$ where $\phi_{\sigma, \xi}^{\omega}(m)$ denotes the canonical projection 
$\int _{Z_M(F)} \omega(z) \phi_{\sigma, \xi}(zm) dz$ of $\phi_{\sigma, \xi}$
onto $C^{\infty}_{c}(M(F), \omega^{-1}).$
Furthermore, by making a suitable choice of $\xi$ so that $\xi(z)$ vanishes on a finite subset  of $\Omega(\sigma)$ whose central character is $\omega,$ unless $z = \sigma$ (see Lemma \ref{some lemma for finiteness} for the finite subset),
Proposition \ref{pro for relation bw character and orb int} provides a relationship between the orbital integral of  $\phi_{\sigma, \xi}^{\omega}$ and the Harish-Chandra character $\Theta_{\sigma}$
\[
\mathit{O}^{M(F)}_{\gamma}(\phi_{\sigma, \xi}^{\omega}) = \xi(\sigma) \overline{\Theta_{\sigma}(\gamma)},
\]
for every elliptic regular semi-simple $\gamma$ in $M(F).$ 
Here the bar $\, \bar{~~~} \,$ means the complex conjugation.
We note that the value $\xi(\sigma)$ above is determined by $h^{\omega}_{\sigma}(1)$ and the formal degree $d_M(\sigma),$ 
where $h^{\omega}_{\sigma}$ is a matrix coefficient of $\sigma$ (see Remark \ref{rem for the value}).
By taking the regular function $\xi$ with $\xi(\sigma)=1,$ that is, $h^{\omega}_{\sigma}(1)= d_M(\sigma),$ we get a simple relationship between $\phi_{\sigma, \xi}^{\omega}$ and a pseudo-coefficient $\phi_{\sigma}^{\omega}$ of $\sigma$ (see Proposition \ref{clozel}).
Applying $\phi_{\sigma, \xi}^{\omega}$ to the Plancherel formula, we also get
\[
\phi_{\sigma, \xi}(1) = d_M(\sigma)  \int_{\psi \in \Psi_u(M) / {\Stab}_{\Psi}(\sigma)}  \xi(\sigma \otimes \psi) d \psi,
\]
where $\Psi_u(M)$ is the subgroup of $\Psi(M)$ consisting of unitary characters (Proposition \ref{formula of phi(1)}).

Using the cuspidal property and parabolic descent, we lift the generalized pseudo-coefficient $\phi_{\sigma, \xi} \in C^{\infty}_{c}(M(F))$ to $f_{\sigma, \xi} \in C^{\infty}_{c}(G(F)).$ 
They are related by the formula (Theorem \ref{pro for phi and f})
\[
|D_{G/M}(\gamma)| ^{1/2} \mathit{O}^{G(F)}_{\gamma}(f_{\sigma, \xi}) 
= \mathit{O}^{M(F)}_{\gamma}(\overline{f}_{\sigma, \xi}^{(P)})
= \mathit{O}^{M(F)}_{\gamma}(|W(\theta, \theta)| \cdot \phi_{\sigma, \xi}),
\]
for every elliptic regular semi-simple $\gamma$ in $M(F),$ where $D_{G/M}(\gamma) = \det(1-ad(\gamma)) |_{\Lie(G)/\Lie(M)}.$
Here we impose the condition that $\xi$ be a $W(\theta, \theta)$-invariant regular function on $\Omega(\sigma).$
We refer to Section \ref{lifting} for unexplained notations.
As an application, this lifting $f_{\sigma, \xi}$ turns out to simplify the Plancherel formula as in Proposition \ref{formula of f(1)}.

This paper is organized as follows. In Section \ref{prelim}, we recall background material and review basic facts on the Plancherel formula, unramified characters, and regular functions on $\Psi(M)$ and $\Omega(\sigma).$
In Section \ref{sec2}, we prove the existence of a generalized pseudo-coefficient and study its properties: cuspidality, a relation to the Harish-Chandra character function as well as `usual' pseudo-coefficients. 
We also lift the generalized pseudo-coefficient in $C^{\infty}_{c}(M(F))$ to a function in $C^{\infty}_{c}(G(F))$ and apply the generalized pseudo-coefficient and the lift to the Plancherel formula. 

\section{Preliminaries} \label{prelim}
This section is devoted to basic notions, terminologies and known results. We mainly refer to \cite{hai10}, \cite{hc73}, \cite{kot86}, \cite{sh90}, \cite{shin10}, and \cite{wal03}.

\subsection{Notation and convention} \label{notation and convention}
Let $F$ be a $p$-adic field of characteristic $0,$ that is, a finite extension of $\QQ_{p},$ where $p$ is a prime number. 
Let $\bar{F}$ be an algebraic closure of $F.$ Denote by $\OO_F$ the ring of integers of $F$ and by $\pp$ the maximal ideal in $\OO_F.$ Let $q$ denote the cardinality of the residue field $\OO_F / \pp$ and $|\cdot|_F$ the normalized absolute value on $F.$ 

Let $G$ be a connected reductive algebraic group over $F.$ 
Write $G(F)$ for the group of $F$-points of $G.$
For a semi-simple element $\gamma \in G(F),$ write $G_{\gamma}$ for the centralizer in $G$ of $\gamma$ and $G_{\gamma}^{\circ}$ for its identity component. 
We say $\gamma$ is regular if $G_{\gamma}^{\circ}$ is a maximal torus in $G.$ 
Let $G(F)^{\reg}$ denote the set of regular semi-simple elements in $G(F).$  
We say $\gamma$ is elliptic if the quotient $G_{\gamma}^{\circ} / Z_G^{\circ}$ is anisotropic over $F.$
Let $G(F)^{\el}$ denote the set of elliptic regular semi-simple elements in $G(F).$

We denote by $C^{\infty}_{c}(G(F))$ the Hecke algebra of locally constant functions with compact support. 
Let $\Irr(G(F))$ (resp., $\Irr_u(G(F))$) be the set of equivalence classes of irreducible admissible (resp., unitary) representations of $G(F).$ 
By abuse of notation, we identify an equivalence class with its representative.
Let $\Pi_{\temp}(G(F))$ and $\Pi_{\disc}(G(F))$ be the subsets of $\Irr(G(F))$ consisting of irreducible tempered representations and discrete series representations of $G(F),$ respectively.
Here, we say an admissible representation is a discrete series representation if it is in $\Irr_u(G(F))$ and has non-zero matrix coefficients which are square-integrable modulo the center of $G(F).$ 
Since it is irreducible, if one non-zero matrix coefficient is square-integrable, then so are all the others.

Fix a minimal $F$-parabolic subgroup $P_0$ of $G$ with its Levi decomposition $M_0 N_0,$ where $M_0$ is a minimal $F$-Levi subgroup and $N_0$ is the unipotent radical of $P_0.$ 
Let $A_0$ be the split component of $M_0$ (or $P_0$), that is, the maximal $F-$split torus in the center of $M_0.$ Denote by $W_G = W(G, A_0) := N_G(A_0) / Z_G(A_0)$ the Weyl group of $A_0$ in $G,$ where $N_G(A_0)$ and $Z_G(A_0)$ are the normalizer and the centralizer of $A_0$ in $G,$ respectively.
We denote by $\Phi$ the set of roots of $G$ with respect to $A_0.$ Let $\Phi^+ $ be the set of positive roots. Note that the choice of $P_0$ determines the set $\Phi^+$ of positive roots. Let $\Delta \subset \Phi^+$ denote the set of simple roots.

Let $P$ be a standard $F$-parabolic subgroup (that is, $P \supseteq P_0$) with its Levi factor $M=M_{\theta} \supseteq M_0$ generated by a subset $\theta \subseteq \Delta$ and its unipotent radical $N \subseteq N_0.$ 
Let $A_{M}$ be the split component of $M.$ Then we write $\Phi_{\theta}$ for the subset of $\Phi$ consisting of $\ZZ-$linear combinations of the roots in $\theta,$ and $\Phi^+_{\theta}$ for $\Phi_{\theta} \cap \Phi^+.$ 
Denote by $W_M = W(G, A_M) := N_G(A_M) / Z_G(A_M)$ the Weyl group of $A_M$ in $G.$ We denote by $P^- = M N^-$ the parabolic subgroup opposite to $P,$ where $N^-$ is opposite to $N.$ For a standard $F$-Levi subgroup $L = L_{\vartheta},$ we set $W(\vartheta, \theta):= \lbrace w \in W_G : w(\vartheta) = \theta \rbrace.$

Let $X^{*}(M)_F$ be the group of $F$-rational characters of $M.$ We denote by
$
\ma_M := \Hom(X^{*}(M)_F ,\RR) = \Hom(X^{*}(A_M)_F , \RR)
$
the real Lie algebra of $A_{M}.$ Let
$
\mathfrak{a}^{*}_{M, \CC} := \mathfrak{a}^{*}_{M} \otimes_{\RR} \CC
$
be the complex dual of $\ma_M,$ where
$
\ma^{*}_{M} := X^{*}(A_{M})_F \otimes_{\ZZ} \RR = X^{*}(M)_F \otimes_{\ZZ} \RR
$
denotes the dual of $\ma_M.$ We define the homomorphism $H_{M} : M(F) \rightarrow \mathfrak{a}_{M}$ by 
\[
q ^{\langle  \chi, H_{M}(m) \rangle} = |\chi(m)|_F
\]
for all $\chi \in X^{*}(M)_F$ and $m \in M(F).$ Fix a maximal special compact (parahoric) subgroup $K$ of $G(F).$ Then we have the Iwasawa decomposition $G(F)=P(F)K.$ Note that one can extend $H_M$ to $G(F)$ using the Iwasawa decomposition by $q ^{\langle  \chi, H_{M}(mnk) \rangle} = |\chi(m)|_F.$

Let $\delta_P: M(F) \rightarrow \RR^{\times}_{>0}$ be the modulus character of $P$ defined by 
$
m \mapsto |\det(\ad(m))|_{\Lie(P) / \Lie(M)}|_F.
$ 
Note that $\delta_P(m)=q ^{\langle  2\rho_P, H_{M}(m) \rangle},$ where $\rho_P:= \frac{1}{2} \sum_{\alpha \in \Phi^+ \smallsetminus \Phi^{+}_{\theta}} \alpha.$ Moreover, $\delta_P$ can be extended to $G(F)$ in the same way as $H_M.$

Now we define the normalized induced representation and the intertwining operator. 
For $\sigma \in \Irr(M(F))$ and $\nu \in \ma^{*}_{M, \CC},$ we denote by $I(\nu, \sigma)$ the normalized induced representation 
\begin{equation*} \label{ind}
I(\nu, \sigma) = {\Ind}_{P(F)} ^{G(F)} (\sigma \otimes q ^{\langle  \nu, H_{M}(\,) \rangle} \otimes \mathbbm{1}).
\end{equation*}
The space $V(\nu, \sigma)$ of $I(\nu, \sigma)$ consists of locally constant functions $f$ from $G(F)$ into the representation space of $\sigma$ such that 
$
f(mng) = \sigma(m) q ^{\langle  \nu + \rho_P, H_{M}(m) \rangle} f(g),
$
for $m \in M(F),$ $n \in N(F)$ and $g \in G(F).$ The group $G(F)$ acts on $V(\nu, \sigma)$ by the regular right action. 
We often write $\ii_{G,M}(\nu, \sigma)$ for $I(\nu, \sigma)$ in order to specify groups.

\subsection{Plancherel formula} \label{def of PM}
We review the Plancherel formula. 
We continue with the notation of Section \ref{notation and convention}.
Fix a representative $w \in G$ of $\tilde{w} \in W_G$ such that $\tilde{w}(\theta) \subseteq \Delta.$ 
For simplicity we often omit $\sim$ for the representative (see \cite[p.280]{sh90}). 
Denote $N_{\tilde{w}} :=N_0 \cap wN^{-}w^{-1},$ that is, the subgroup of $N_0$ generated by the root groups $U_{\alpha},$ $\alpha \in \Phi ^+$ such that $w^{-1}\alpha \in N^-.$ 
We fix a Haar measure $dn$ on $N_{\tilde{w}}.$ Given $f \in V(\nu, \sigma),$ for $g \in G(F),$ the standard intertwining operator is defined as
\begin{equation*} \label{intertwining}
A(\nu, \sigma, \tilde{w}) f(g) = \int_{N_{\tilde{w}}(F)} f(w^{-1} n g) dn.
\end{equation*}
Note that the integral converges absolutely if $Re \langle \nu, \alpha^{\vee} \rangle \gg 0, \; \forall \alpha \in \Delta \smallsetminus \theta,$ where $\alpha^{\vee}$ is the coroot attached to $\alpha.$ 
Moreover, it turns out that $A(\nu, \sigma, \tilde{w}) : I(\nu, \sigma) \rightarrow I(\tilde{w} (\nu), \tilde{w}(\sigma)),$ where $\tilde{w}(\sigma)(m_w):=\sigma(w^{-1}m_w w)$ for $m_w \in M_{\tilde{w}(\theta)}(F)$ with $M_{\tilde{w}(\theta)}=wMw^{-1}.$ We set
\[
\gamma _{\tilde{w}}(G|M) = \int_{\bar{N}_{\tilde{w}}(F)} q ^{\langle 2 \rho_P, H_{M}(\bar{n}) \rangle} d\bar{n},
\]
where $\bar{N}_{\tilde{w}}:=w^{-1}N_{\tilde{w}}w=N^- \cap w^{-1}N_0 w$ and $d\bar{n}$ is the normalized Haar measure on $\bar{N}_{\tilde{w}}(F)$ so that the measure of $K_0 \cap \bar{N}_{\tilde{w}}(F)$ is one. 
Here, $K_0$ is a maximal special parahoric subgroup of $G(F).$

We define the Plancherel measure, due to \cite[Theorem 20]{hc73}, as follows.
\begin{definition} \label{df PM}
For $\nu \in \ma^{*}_{M, \CC},$ $\sigma \in \Irr_u(M(F))$ and $\tilde{w}(\theta) \subseteq \Delta,$ \textit{the Plancherel measure} attached to  $\nu,$ $\sigma$ and $\tilde{w}$ for $\ii_{G,M}(\nu, \sigma)$ is defined as a non-zero complex number $\mu_{M}(\nu, \sigma, w)$ such that
\[
A(\nu, \sigma, \tilde{w}) A(\tilde{w} (\nu), \tilde{w}(\sigma), \tilde{w}^{-1})= \mu_{M}(\nu, \sigma, w)^{-1} 
\gamma _{\tilde{w}}(G|M)^{2}.
\]
\end{definition}
The Plancherel measure $\mu_{M}(\nu, \sigma, w)$ depends only on $\nu$, $\sigma$ and $\tilde{w}$. It is independent of the choices of any Haar measure and of any representative $w$ of $\tilde{w}$ (\cite[p.280]{sh90}). 
\begin{example}
Suppose $F=\QQ_p,$ $G=SL_2,$ $M \s GL_1$ (the diagonal matrix) and $w = \bigl(\begin{smallmatrix}
0&1\\ -1&0
\end{smallmatrix} \bigr).$ Denote by $\chi$ a unitary unramified character of $M(F).$ For $s \in \CC,$ we identify $s$ with $s \tilde{\alpha} \in \ma^{*}_{M, \CC} \s \CC,$ where $\tilde{\alpha}=\frac{1}{2}(e_1 - e_2).$ Then the Plancherel measure $\mu_{M}(s, \chi, w)$ attached to $s,$ $\chi$ and $w$ is
\[
(1+\frac{1}{p})^2 \cdot
\frac{1-\chi(p)p^{-s}}{1 - \chi(p) p^{-s-1}} \cdot
\frac{1-\chi(p)^{-1}p^{s}}{1 - \chi(p)^{-1} p^{s-1}},
\]
which lies in the field $\CC  ( p^{-s})$ of rational functions in $p^{-s}.$
\end{example}

We now recall the Plancherel formula of Harish-Chandra. 
We refer to \cite[Section 16]{hc84} and \cite[Theorem VIII.1.1]{wal03}. 
We normalize Haar measures as in \cite[Section 5]{hc84}. 
Fix a maximal special parahoric subgroup $K_0$ of $G(F),$ which is in a good relative position with $M_0$ (cf. \cite[Section 0.6]{sil79}). 
For any closed subgroup $H(F)$ of $G(F)$ with a Haar measure $dh$ on $H(F),$ we normalize $dh$ so that the volume $\Vol_{dh}(H(F) \cap K_0) = 1.$ 
Then, Harish-Chandra's $\gamma$-factor $\gamma(G|M)$ and Harish-Chandra's $c$-factor $c(G|M)$ are defined as
\begin{equation*} \label{gamma(G|M)}
\gamma(G|M) 
:= \int_{N^{-}} q ^{\langle 2 \rho_P, H_{M}(n^{-}) \rangle} dn^{-},
\end{equation*}
\[
c(G|M) :=  \gamma(G|M)^{-1} \prod_{\alpha \in \Phi(P, A_M)} \gamma(M_{\alpha}|M).
\]
Set 
\[
a(G|M) := c(G|M)^{-2} \gamma(G|M) ^{-1} |W_M|^{-1}.
\] 

For $\pi \in \Pi_{\disc}(M(F)),$ we denote by $d_M(\pi)$  the formal degree of $\pi$ with respect to the Haar measure $dm$ on $M(F)$ 
and by $\mu_M (\pi)$ the Plancherel measure $\mu_M (0,\pi,w)$
as in Definition \ref{df PM}.
We write $d \pi$ for the measure at $\pi \in \Pi_{\disc}(M(F))$ that is transferred from the normalized Haar measure $d\psi$ on the quotient $\Psi_u(M) / {\Stab}_{\Psi}(\pi)$ (see Sections \ref{unram} and \ref{reg} for definitions) via the natural isomorphism 
\[
\Psi_u(M) / {\Stab}_{\Psi}(\pi) \s \{ \pi \otimes \psi : \psi \in \Psi_u(M) \}
\]
(see \cite[Section 5.4]{mt11} and \cite[Section 2]{hc84}). 
The following proposition states the Plancherel formula.
\begin{theorem} (Harish-Chandra, \cite[Section 16]{hc84}) \label{pf}
For any $f \in C^{\infty}_{c}(G(F)),$
\[
f(1)  =  \dsum_{M} a(G|M) \int \limits_{\pi \in \Pi_{\disc}(M(F))} d_M (\pi) \mu_M (\pi) trace(\ii_{G,M} \pi )(f) d \pi,
\]
where the sum runs over all $F$-Levi subgroups $M$ up to $G(F)$-conjugacy.
\qed
\end{theorem}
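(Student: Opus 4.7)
The plan is to interpret the formula as a Fourier inversion formula on the tempered dual $\Pi_{\temp}(G(F))$, with Plancherel measure $d\mu_{\mathrm{Pl}}(\pi) = a(G|M)\, d_M(\pi)\, \mu_M(\pi)\, d\pi$ on each component indexed by a Levi $M$ and a discrete series $\sigma$ of $M(F)$. The essential structural input is Harish-Chandra's classification of tempered representations: every $\pi \in \Pi_{\temp}(G(F))$ is a direct summand of some $\ii_{G,M}(\nu, \sigma)$ with $\sigma \in \Pi_{\disc}(M(F))$ and $\nu \in i\ma_M^{*}$, and two pairs $(M, \sigma\otimes\psi)$ and $(M', \sigma')$ give equivalent inductions only up to the action of $W_M$ and unitary unramified twists $\psi$. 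Thus the tempered dual decomposes as a disjoint union of quotients of $i\ma_M^{*}$, summed over $G(F)$-conjugacy classes of $(M,\sigma)$, which accounts for the $\sum_M$ and the integral over $\Pi_{\disc}(M(F))$ in the statement.

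The proof splits into two steps. First, establish the abstract inversion formula
\[
f(1) = \int_{\Pi_{\temp}(G(F))} \mathrm{trace}(\pi(f))\, d\mu_{\mathrm{Pl}}(\pi)
\]
for some measure $d\mu_{\mathrm{Pl}}$. Following Harish-Chandra and Waldspurger, one first extends the discussion to the Harish-Chandra Schwartz space $\mathcal{C}(G(F))$ and constructs wave packets: for each component $(M, \sigma)$ and each Schwartz function $\alpha$ on $i\ma_M^{*}$ one forms a function of $g \in G(F)$ by integrating $\alpha(\nu)$ against matrix coefficients of $\ii_{G,M}(\nu, \sigma)$ over $\nu$. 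Using the Maass--Selberg relations together with uniform estimates on intertwining operators, one shows that the wave packets lie in $\mathcal{C}(G(F))$, span a dense subspace, and that pairing with evaluation at $1$ yields the inversion formula.

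The second step is to identify the explicit density $a(G|M)\, d_M(\pi)\, \mu_M(\pi)$. On the discrete component ($M=G$), Schur orthogonality for square-integrable matrix coefficients modulo the center gives the density $d_G(\sigma)$. On a general $(M,\sigma)$-component, the density splits into three contributions: the factor $|W_M|^{-1}$ in $a(G|M)$ compensates for the Weyl action on pairs $(M,\sigma)$; the formal degree $d_M(\sigma)$ enters via the inductive application of the Plancherel formula on $M(F)$ to matrix coefficients of $\sigma$; and $c(G|M)^{-2}\gamma(G|M)^{-1}\mu_M(\pi)$ packages the composition of standard intertwining operators $A(\nu,\sigma,\tilde w) A(\tilde w\nu,\tilde w\sigma,\tilde w^{-1})$ (Definition \ref{df PM}) together with the normalization of Haar measures on the unipotent radicals $N_{\tilde w}$ and $\bar N_{\tilde w}$ used to define $\gamma(G|M)$ and $c(G|M)$.

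The main obstacle is the first step: rigorously constructing the wave packets and proving that they exhaust $\mathcal{C}(G(F))$. This rests on a substantial analytic apparatus — Harish-Chandra's weak inequality, the Maass--Selberg relations, rationality of intertwining operators in the induction parameter $\nu$, and precise tracking of operator norms as $\nu$ varies over the imaginary axis — and it is the reason the Plancherel theorem is genuinely deep rather than merely formal. Once that machinery is in place, the identification of the explicit density in the second step reduces to an intricate but concrete computation involving orthogonality for discrete series and the definition of $\mu_M(\pi)$ via the intertwiners of Definition \ref{df PM}.
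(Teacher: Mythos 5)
The paper does not prove this theorem: it is stated with a citation to Harish-Chandra's collected works \cite[Section 16]{hc84} and immediately closed with a \qed, meaning it is taken as a known external result (see also the reference to Waldspurger \cite[Theorem VIII.1.1]{wal03} in the preceding paragraph). So there is no in-paper proof to compare against.

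That said, your sketch is a faithful high-level account of how the theorem is actually established in the literature: the passage to the Harish-Chandra Schwartz space $\mathcal{C}(G(F))$, construction of wave packets attached to a pair $(M,\sigma)$ and a Schwartz function on $i\mathfrak{a}_M^*$, density of wave packets via Maass--Selberg relations and estimates on intertwining operators, and then the identification of the Plancherel density from Schur orthogonality on the discrete spectrum together with the composition of standard intertwiners, with the factor $|W_M|^{-1}$ absorbing the Weyl-group overcounting of pairs $(M,\sigma\otimes\psi)$. Two remarks. First, this is not a proof but a roadmap; you correctly flag that the heart of the argument --- proving that wave packets lie in and exhaust $\mathcal{C}(G(F))$, which requires Harish-Chandra's weak inequality, uniform temperedness estimates, and analytic continuation of the intertwining operators --- is exactly where essentially all of the work lies, and you do not carry it out. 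Second, be careful with the normalization bookkeeping: the constant $a(G|M) = c(G|M)^{-2}\gamma(G|M)^{-1}|W_M|^{-1}$ and the measure $d\psi$ on $\Psi_u(M)/\Stab_\Psi(\sigma)$ that gives rise to $d\pi$ depend on the specific choice of Haar measures fixed in Section \ref{def of PM} (volume-one normalization on $K_0\cap H(F)$); the clean factorization into $a(G|M)\,d_M(\pi)\,\mu_M(\pi)$ only holds with those compatible normalizations, and this is precisely the content of the reduction via $\gamma_{\tilde w}(G|M)$ in Definition \ref{df PM}. For a context like this paper, the appropriate move is what the author does: cite the theorem rather than reprove it.
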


\subsection{Unramified characters} \label{unram} 
We introduce the group $\Psi(M)$ of unramified characters of $M(F).$ 
We continue with the notation in Sections \ref{notation and convention} and \ref{def of PM}.
Let $M$ be a standard $F$-Levi subgroup of $G$ with split component $A_{M}.$ 
We denote by $Z_M$ the center of $M.$
We write 
\[ 
M(F)^{1} := \{ m \in M(F) : |\chi(m)|_F = 1 ~ \text{for all characters} ~ \chi \in  X^{*}(M)_F \}.
\]
We define the group of unramified characters $\Psi(M)$ on $M(F)$ as follows 
\[
\Psi(M):=\Hom(M(F) / M(F)^{1}, \CC^{\times}).
\]
Denote by $\Psi_u(M)$ the subgroup of $\Psi(M)$ consisting of unitary unramified characters. 
\begin{remark} \label{properties of M/M1}
We note the following facts (see \cite[Section 3.3]{mt11} and \cite[Section 1.4.1]{roc09}).
   \begin{itemize}
    \item[(i)] $M(F)^{1} = \ker(H_{M}).$
    \item[(ii)] Every compact subgroup in $M(F)$ is contained in $M(F)^{1}.$ 
     In particular, $U(F)$ is contained in $L(F)^{1}$ for all standard $F$-parabolic subgroups $Q$ of $M$ with Levi decomposition $Q=LU.$
    \item[(iii)] $M(F)^{1}$ is an open, closed, normal and unimodular subgroup in $M(F).$
    \item[(iv)] The quotient $M(F) / M(F)^{1}$ is isomorphic to $\ZZ^{d},$ where $d$ is the dimension of $A_M.$ In particular, if the center of $M$ is anisotropic over $F,$ then we have $M(F)=M(F)^{1}.$
    \item[(v)]  The index $[M(F) : Z_M(F) M(F)^1]$ is finite. Since the quotient $Z_M(F) / (Z_M(F) \cap M(F)^1)$ is a full rank sublattice of $M(F) / M(F)^1,$ the index $[M(F) / M(F)^1 : Z_M(F) / (Z_M(F) \cap M(F)^1)]$ is also finite.    
   \end{itemize}   
\end{remark}
 
\begin{remark} 
We fix a $\ZZ$-basis $(m_1, m_2, \cdots, m_d)$ for $M(F) / M(F)^{1} \s \ZZ^{d}.$ Then the map 
\[
\psi \mapsto (\psi(m_1), \psi(m_2), \cdots, \psi(m_d))
\] 
yields the following isomorphisms 
\[
\Psi(M) \s (\CC^{\times})^{d} ~ \text{and} ~  \Psi_u(M) \s (S^1)^{d},
\]
where $S^1$ is the unit circle in $\CC^{\times}$ (see \cite{roc09}).
\end{remark}
\begin{example} \label{example for unram} 
Suppose $G = M = GL_m.$ One has that $X^{*}(G)_F = \lbrace \det ^{n} : n \in \ZZ \rbrace \s \ZZ.$ 
We then have a short exact sequence
 \[
 1 \longrightarrow G^{1}(F) \longrightarrow G(F) \overset{|\det|_F}{\longrightarrow}
  q^{\ZZ} \longrightarrow 1.
 \]
We get $G(F) / G^{1}(F) \s \im(|\det|_F)=\lbrace q^{k} : k \in \ZZ \rbrace \s \ZZ.$
Thus we have $\Psi(G) \s \Hom(\ZZ, \CC^{\times}) \s \CC^{\times}$ and $\Psi_u(G) \s \Hom(\ZZ, S^{1}) \s S^{1}.$
In particular, we note that 
\[
\Psi(G) = \lbrace g \mapsto |\det(g)|^{s}_F : s \in \CC \rbrace \s \ma^{*}_{G, \CC} /  \mathcal{L}_{G},
\] 
\[\Psi_u(G) = \lbrace g \mapsto |\det(g)|^{s}_F : s \in \sqrt{-1}\RR \rbrace \s \sqrt{-1} \ma^{*}_G / \mathcal{L}_{G}.
\] 
Here $\mathcal{L}_{G}:= \frac{2 \pi \sqrt{-1}}{\log(q)} X^{*}(G)_F$ is a lattice in $\sqrt{-1} \ma^{*}_G.$
\end{example}
\begin{remark} [Epimorphism from $\ma^{*}_{M, \CC}$ onto $\Psi(M)$] \label{epimorph}
There is a natural homomorphism $\nu \mapsto \psi_{\nu}$ from 
the $\CC$-vector space $\ma^{*}_{M, \CC}$ to the group  $\Psi(M)$ of unramified characters defined by
\[
\psi_{\nu}(m) := q ^{\langle  \nu, H_{M}(m) \rangle}.
\]
We thus have the following properties (see \cite[Section 3.3]{mt11}).
\begin{itemize}
 \item[(a)]  $\psi_{\nu} \in \Psi_u(M)$ (that is, $\psi_{\nu}$ is unitary) if and only if $\nu \in \sqrt{-1} \ma^{*}_M.$
 \item[(b)]  $\psi_{\nu}$ is trivial if and only if $\nu \in \frac{2 \pi \sqrt{-1}}{\log(q)} X^{*}(M)_F \subset \sqrt{-1} \ma^{*}_M.$ Notice that $\mathcal{L}_{M} := \frac{2 \pi \sqrt{-1}}{\log(q)} X^{*}(M)_F$ is a lattice in $\sqrt{-1} \ma^{*}_M.$
 \item[(c)] The homomorphism $\nu \mapsto \psi_{\nu}$ is surjective. Hence, we have natural isomorphisms 
 \[
 \ma^{*}_{M, \CC} / \mathcal{L}_{M} \s \Psi(M) ~ \text{and} ~ \sqrt{-1} \ma^{*}_{M} / \mathcal{L}_{M} \s \Psi_u(M).
 \]
\end{itemize}
\end{remark}

\begin{remark} \label{pm at L_F}
Let $\nu \in \ma^{*}_{M, \CC},$ $\sigma \in \Irr_u(M(F))$ and $\tilde{w} \in W_G$ be given as in Definition \ref{df PM}. We observe that if $\nu \in \mathcal{L}_{M},$ then we have
$
\mu_M(\nu, \sigma, w) = \mu_M(0, \sigma, w).
$
Hence, given $\psi \in \Psi(M),$ we write $\mu_M(\sigma \otimes \psi)$ for $\mu_M(\nu, \sigma, w),$ where $\nu \in \ma^{*}_{M, \CC}$ is obtained by letting $\psi_{\nu} = \psi$ in the above natural isomorphism $\nu \mapsto \psi_{\nu}$ from $\ma^{*}_{M, \CC} / \mathcal{L}_{M}$ onto $\Psi(M).$
\end{remark}

\subsection{Regular functions on $\Psi(M)$ and $\Omega(\sigma)$} \label{reg} 
We discuss a regular function (resp., a rational function) on $\Psi(M),$ based on the notions and facts in \cite[Sections III - V]{wal03}. We continue with the notation as in Sections \ref{def of PM} and \ref{unram}. We fix a $\ZZ$-basis $(m_1, m_2, \cdots, m_d)$ for $M(F) / M(F)^{1} \s \ZZ^{d}.$ Then $\Psi(M)$ is an abelian complex algebraic group that is isomorphic to $({\CC}^{\times})^{d}$ via
\[
\lambda(\psi) = (\psi(m_1), \psi(m_2), \cdots, \psi(m_d)). 
\]
Here $d$ is the dimension of $A_M.$ Note that $\CC[z_1, z_1^{-1}, \cdots, z_d, z_d^{-1}]$ is the set of all regular functions on $({\CC}^{\times})^{d}.$  
\begin{definition} \label{def of reg ft}
A function $\xi : \Psi(M) \rightarrow \CC$ is said to be \textit{a regular function} (resp., \textit{a rational function}) on $\Psi(M)$ if there exists a regular (resp., rational) function $p(z)$ on $({\CC}^{\times})^{d}$ such that $\xi(\psi) = p(\lambda(\psi))$ for all $\psi \in \Psi(M).$ Here $z = (z_1, \cdots, z_d).$
\end{definition}
\begin{remark} (\cite[Section IV.3 and Lemma V.2.1]{wal03} and \cite[Theorem 20]{hc73}) \label{rationality of pm}
Let $\sigma \in \Irr(M(F))$ be given. The Plancherel measure $\mu_{M} (\sigma \otimes \psi)$ (see Remark \ref{pm at L_F}) is a rational function on $\Psi(M).$ 
Moreover, it is a non-negative and regular function on $\psi \in \Psi_u.$
\end{remark}
\begin{remark} \label{rem of poly}
There is a natural action of $W_M$ on the set of regular functions on $\Psi(M)$ as follows. 
Given a regular function $\xi$ on $\Psi(M),$ there exists a regular function $p(z)$ on $({\CC}^{\times})^{d}$ such that $\xi(\psi) = p(\lambda(\psi))$ for all $\psi \in \Psi(M).$ 
We define
\[
^{w}\xi (\psi) = \xi(^{w}\psi).
\]
Here, the action of the Weyl group $W_M$ on $\Psi(M)$ is defined by $^{w}\psi(m) = \psi(w^{-1}mw),$
for $w \in W_M,$ $\psi \in \Psi(M),$ and $m \in M(F).$
It then turns out that $^{w}\xi$ is again a regular function on $\Psi(M),$ since 
$
^{w}\xi(\psi) = \, ^{w}p(\lambda(\psi)),
$ where $^{w}p(z) := p(\lambda(\, ^{w}(\lambda^{-1}(z))))$ is a regular function on $({\CC}^{\times})^{d}.$ 
\end{remark}

Next, we introduce a regular function (resp., a rational function) on the $\Psi(M)$-orbit
\[
\Omega(\sigma) := \lbrace \sigma \otimes \psi : \psi \in \Psi(M)  \rbrace
\]
of $\sigma \in \Irr(M(F)).$ 
Set 
\[
{\Stab}_{\Psi}(\sigma) := \lbrace \psi \in \Psi(M) : \sigma \otimes \psi \s \sigma \rbrace 
\]
to be the stabilizer of $\sigma$ in $\Psi(M).$ 
\begin{remark} \label{finite Stab}
We note that ${\Stab}_{\Psi}(\sigma)$ is a finite subgroup in $\Psi_u(M)$ (see \cite[Section 1.4.1]{roc09}) and thus a finite subgroup in $(S^1)^d,$ where $d$ is the integer with $\Psi_u(M) \s (\CC^{\times})^d.$
There is also an isomorphism $\Omega(\sigma) \s \Psi(M) / {\Stab}_{\Psi}(\sigma),$ whence $\Omega(\sigma)$ is naturally equipped with a complex (quotient) variety structure.
\end{remark} 
\begin{definition}  \label{def of reg ft on omega} 
A function $\xi : \Omega(\sigma) \rightarrow \CC$ is said to be \textit{a regular function} (resp., \textit{a rational function}) on $\Omega(\sigma)$ if the function $\psi \mapsto \xi(\sigma \otimes \psi)$ is a regular function (resp., a rational function) on $\Psi(M).$
\end{definition}

\begin{remark} \label{rem of reg ft on omega}
For a given subgroup $H \subset \Psi(M),$ a regular (resp., rational) function $\xi$ on $\Psi(M)$ is said to be \textit{invariant under} $H$ if $\xi(\psi) = \xi(\psi')$ for all $\psi,$ $\psi'$ $\in H.$ We consider an isomorphism $\Omega(\sigma) \s \Psi(M) / {\Stab}_{\Psi}(\sigma).$ Then, given a regular (resp., rational) function $\xi_o$ on $\Psi(M)$ which is invariant under ${\Stab}_{\Psi}(\sigma),$ we set 
\[
\xi(z) := \xi_o(\psi) 
\]
for  $z = \sigma \otimes \psi \in \Omega(\sigma).$ It then turns out that $\xi$ is a regular function (resp., a rational function) on $\Omega(\sigma).$ 
\end{remark}

Here are some examples of regular functions on $\Psi(M)$ and $\Omega(\sigma).$
\begin{example}
Let $\sigma \in \Irr_u(M(F))$ be given. We denote by $m$ the cardinality of the finite group ${\Stab}_{\Psi}(\sigma).$ 
For any polynomial $p(z) \in \CC[z_1^m, z_1^{-m}, \cdots, z_d^m, z_d^{-m}],$ we set  $\xi_o(\psi)=p(\lambda(\psi))$ for all $\psi \in \Psi(M).$ 
Then $\xi_o$ is a regular function on $\Psi(M)$ that is invariant under ${\Stab}_{\Psi}(\sigma),$ since $\chi^m = 1$ for any $\chi \in {{\Stab}}_{\Psi}(\sigma).$ Hence, a function
  \[
  \xi(z) = \xi(\sigma \otimes \psi) := \xi_o(\psi)
  \]
on $\Omega(\sigma)$ is regular.
\end{example}
\begin{remark} \label{W-invariant}
Let $\xi_o$ be a regular function on $\Psi(M).$ For any $w \in W_M,$ Remark \ref{rem of poly} implies that $^{w}\xi_o$ is again a regular function on $\Psi(M).$ Define
\begin{equation} \label{W invariant equation}
\xi(z) := \dfrac{1}{|W_M|}\dsum_{w \in W_M} \, ^{w}\xi_o(z).
\end{equation}
Then $\xi(z)$ turns out to be a $W_M$-invariant regular function on $\Psi(M),$ that is, $^{w}\xi = \xi$ for any $w \in W_M.$ 
Therefore, there is always a $W_M$-invariant regular function on $\Psi(M).$
In fact, this holds for any subgroup of $W_M$ by replacing $W_M$ in \eqref{W invariant equation} with the subgroup. 
\end{remark}

\section{Generalized pseudo-coefficients} \label{sec2}
We continue with the notation of Section \ref{prelim}.
Let $G$ be a connected reductive algebraic group over a $p$-adic field $F$ of characteristic $0,$ and let $M$ be a standard $F$-Levi subgroup (possibly, $M=G$) of $G$ with split component $A_{M}.$ 
In this section, for any $\sigma \in \Pi_{\disc}(M(F))$ and regular function $\xi$ on the $\Psi(M)$-orbit $\Omega(\sigma)$ of $\sigma,$ we obtain a function $\phi_{\sigma, \xi} \in C^{\infty}_{c}(M(F))$ satisfying a certain trace relation (Theorem \ref{phi}). 
We then explore several properties of $\phi_{\sigma, \xi}$ and lift $\phi_{\sigma, \xi} \in C^{\infty}_{c}(M(F))$ to $f_{\sigma, \xi} \in C^{\infty}_{c}(G(F)).$ Finally, we apply the Plancherel formula to $\phi_{\sigma, \xi}$ and $f_{\sigma, \xi}.$

Following the notation of \cite{bdk}, 
we denote by $R(M)$ the Grothendieck group of smooth representations of $M(F)$ ($M(F)$-module) of finite length. Note that $R(M)$ is a free  abelian group with basis $\Irr(M(F)).$ 
For a standard $F$-parabolic subgroup $Q=LU$ of $M,$ the normalized (twisted by $\delta_{Q}^{1/2}$) induced representation $\ii_{M,L} (\pi)$ from $\pi \in \Irr(L(F))$ defines the morphism $\ii_{M,L} : R(L) \rightarrow R(M).$ 
Here $\delta_{Q}$ is the modulus character of $Q(F).$
A cuspidal pair is defined by ($L,$ $\rho$), where $L$ is a standard $F$-Levi subgroup of $M$ and $\rho \in \Irr(L(F))$ is an irreducible supercuspidal $L(F)$-module. 
We denote by $\Theta(M)$ the set of all cuspidal pairs up to $M(F)$-conjugation.
We say ($L,$ $\rho$) and ($L',$ $\rho'$) are $M(F)$-conjugate if there exists $m \in M(F)$ 
  such that $L' = mLm^{-1}$ and $\rho' = m \rho m^{-1}.$ 
  
Given a cuspidal pair ($L,$ $\rho$), the set $\lbrace (L, \psi \rho) \vert \psi \in \Psi(L) \rbrace$ is called a connected component of $\Theta(M).$ For each $\pi \in \Irr(M(F)),$ there exists a cuspidal pair $(L, \rho)$ unique up to conjugation by $M$ such that $\pi$ is a sub-quotient of $\ii_{M,L}(\rho).$ By sending $\pi$ to $(L, \rho),$ we define a map $\ic : \Irr(M(F)) \rightarrow \Theta(M).$ The map $\ic$ is $\Psi(M)$-equivariant, onto, and finite-to-one. 
Given a connected component $\Theta$ of $\Theta(M),$ the inverse image $\ic^{-1}(\Theta)$ is called a connected component of $\Irr(M(F)).$ 

\subsection{Construction of a generalized pseudo-coefficient} \label{construction} 
We recall the trace Paley-Wiener theorem from \cite[Section 1.2]{bdk} and \cite[Section 4.2]{clo86}.

\begin{definition} \label{good}
A linear functional (group homomorphism) $\lambda : R(M) \rightarrow \CC$ is called \textit{good} if
  \begin{itemize}  
   \item [(i)] the function $\lambda: \Irr(M(F)) \rightarrow \CC$ is supported on a finite number of components;
   \item [(ii)] for any standard $F$-Levi subgroup $L$ and $\pi \in R(M),$ the function $\psi \mapsto \lambda(\ii_{M,L}(\pi \otimes \psi))$ is a regular function on the complex variety $\Psi(L).$
  \end{itemize} 
\end{definition}

\begin{proposition}[Bernstein-Deligne-Kazhdan] \label{pw}
Let $\lambda$ be as in Definition \ref{good}. $\lambda$ is \textit{good} if and only if there is a function $h \in C^{\infty}_{c}(M(F))$ such that for any $\pi \in R(M)$
\[
\lambda(\pi) = trace(\pi)(h).
\]
\end{proposition}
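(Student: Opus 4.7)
The plan is to prove both implications separately, viewing the easy direction as essentially formal and concentrating effort on the hard direction via the Bernstein decomposition.

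For the implication $(\Leftarrow)$ (sufficiency of the trace formula), suppose $h \in C^{\infty}_{c}(M(F))$ is given. To verify condition (i), choose a compact open subgroup $K_0 \subset M(F)$ such that $h$ is bi-invariant under $K_0$. Then $trace(\pi)(h) = 0$ whenever $\pi^{K_0} = 0$, and by Bernstein's theorem that only finitely many components of $\Theta(M)$ meet the set of $\pi$ with $\pi^{K_0} \neq 0$, we conclude $\lambda$ is supported on finitely many components. For (ii), given $\pi \in R(L)$, I would use the standard integral formula for the character of an induced representation,
\[
trace(\ii_{M,L}(\pi \otimes \psi))(h) = trace(\pi \otimes \psi)(h^{(Q)}),
\]
where $h^{(Q)}$ is a suitable constant-term (parabolic descent) of $h$ to $C^{\infty}_{c}(L(F))$ (depending only on $h$, not on $\psi$). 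The $\psi$-dependence then factors through the twist $\pi \otimes \psi$, and writing $\psi$ in the coordinates $\lambda(\psi) \in (\CC^{\times})^{d}$ of Section \ref{reg}, the compact support of $h^{(Q)}$ makes this a finite Laurent polynomial in $z_1, \dots, z_d$, hence a regular function on $\Psi(L)$.

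For the implication $(\Rightarrow)$ (necessity), by condition (i) and linearity it suffices to produce $h$ for a $\lambda$ supported on a single connected component $\ic^{-1}(\Theta)$ with $\Theta = \{(L, \rho \otimes \psi) : \psi \in \Psi(L)\}$. I would then apply the Bernstein decomposition: there is a compact open $K_0$ small enough that every representation in $\ic^{-1}(\Theta)$ has nonzero $K_0$-invariants, and the corresponding block of the Hecke algebra $\mathcal{H}(M(F), K_0)_{\Theta}$ is a finitely generated module over its Bernstein center $\mathfrak{Z}_{\Theta}$, which is canonically the ring of regular functions on $\Psi(L)/{\Stab}_{\Psi}(\rho) \cdot W$ for the finite group $W$ acting on $\Theta$. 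The regularity hypothesis (ii) applied to $L$ and $\rho$ forces the function $\psi \mapsto \lambda(\ii_{M,L}(\rho \otimes \psi))$ to descend to such a regular function on the quotient variety, and dually one obtains an element of $\mathfrak{Z}_{\Theta}$ whose action on a fixed idempotent of $\mathcal{H}(M(F), K_0)_{\Theta}$ produces the desired $h$. Matching the traces at all $\pi \in \ic^{-1}(\Theta)$ reduces to the injectivity of the map from $\mathcal{H}(M(F), K_0)_{\Theta}$ into endomorphisms of the universal induced module, a consequence of Bernstein's second adjointness.

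The main obstacle is unquestionably the hard direction, and more specifically the translation of the abstract regularity in (ii) into an actual element of the Bernstein center. Two subtleties arise: first, one must account for the finite group $W$ acting on the connected component $\Theta$ through intertwining operators, so that $\lambda$ descends only after symmetrization \`a la Remark \ref{W-invariant}; second, Harish-Chandra's theorem on the linear independence of tempered characters must be invoked (or generalized) to ensure that matching $\lambda$ on a Zariski dense subset of the component $\Theta$ suffices to match it everywhere, including at reducible points of the induced representation.
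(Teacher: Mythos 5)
The paper does not give a proof of Proposition \ref{pw}. It is quoted verbatim as the trace Paley--Wiener theorem of Bernstein, Deligne, and Kazhdan, with the references \cite{bdk} and \cite[Section 4.2]{clo86}, and the author simply invokes it. So there is nothing in the paper to compare your argument against; what you have written is a sketch of the original theorem, not of any proof appearing here.

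On its own merits: your $(\Leftarrow)$ direction is correct and standard (bi-invariance under a compact open $K_0$ gives the finiteness of components, and parabolic descent of $h$ gives the Laurent-polynomial regularity in $\psi$). Your $(\Rightarrow)$ direction is in the right spirit --- reduce to one Bernstein component, use the Bernstein decomposition and center --- but two steps are either misdirected or seriously underdeveloped. First, $\lambda$ is a functional on all of $R(M)$ restricted to a component, not merely on the $\Psi(L)$-orbit of a single supercuspidal $\rho$; the heart of \cite{bdk} is the algebraic structure theorem for $R(M)_{\Theta}$ as a module over the regular functions on the variety $\Theta$, together with the fact (their analogue of the paper's Proposition \ref{standard}) that standard modules give a $\ZZ$-basis, and your sketch elides how the regularity of $\psi \mapsto \lambda(\ii_{M,L}(\rho \otimes \psi))$ pins down $\lambda$ on the reducible points and on the non-generic part of the component. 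Invoking Harish-Chandra's linear independence of tempered characters does not supply this --- the issue is the structure of the Grothendieck group over a complex variety, not linear independence of distributions. Second, producing $h$ by letting an element of the Bernstein center $\mathfrak{Z}_{\Theta}$ act on an idempotent and then deducing the trace matching from ``injectivity'' via second adjointness is not the argument in \cite{bdk}; they construct $h$ directly from a regular function on the component and verify the trace identity through the geometry of $\Theta$, and the step you call injectivity requires precisely the structural input about $R(M)_{\Theta}$ that is the main technical content of the theorem. As written, the hard direction is a plausible outline but contains a genuine gap at its crux.
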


\begin{remark}
Instead of (i) in Definition \ref{good}, it is customary to use the condition that there exists an open compact subgroup $K_M \subset M(F)$ such that $\lambda$ is non-zero only on the $M(F)$-modules having nontrivial $K_M$-invariant vectors. We note that both are equivalent by Decomposition Theorem in \cite[Section 2.3]{bdk}.
\end{remark}

\begin{definition}
We say that the normalized induced representation $\ii_{M,L}(\pi)$ is \textit{a standard module} if $\pi \s \tau \otimes q ^{\langle  \nu, H_{L}( \,) \rangle}$ for some $\tau \in \Pi_{\temp}(L(F))$ and $\nu \in (\ma^{*}_{L})^{+}:= \lbrace \nu \in \ma^{*}_{L} : \langle \alpha^{\vee}, \nu \rangle > 0, \; \; \forall \alpha \in \Delta_M \smallsetminus \Delta_L \rbrace$ (the positive Weyl chamber), where $\Delta_M$ and $\Delta_L$ are the sets of simple roots in $M$ and $L$ respectively (see \cite[p.279]{clo86} where it is called a standard character).
\end{definition}
\begin{proposition}(\cite[Proposition 2 p.279 ]{clo86}) \label{standard}
The standard modules form a basis over $\ZZ$ for the Grothendieck group $R(M).$
\end{proposition}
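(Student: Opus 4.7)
The plan is to deduce this from the Langlands classification for $p$-adic groups, which gives a canonical bijection between $\Irr(M(F))$ and the set of equivalence classes of standard modules, together with a triangularity property expressing each standard module as its Langlands quotient plus a $\mathbb{Z}$-combination of strictly "smaller" irreducibles. Since the irreducible admissible representations form a $\mathbb{Z}$-basis of $R(M)$ by definition, a triangular change-of-basis with diagonal entries equal to $1$ will show that the standard modules form a $\mathbb{Z}$-basis as well.

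More precisely, I would proceed in three steps. First, recall Langlands' theorem: for every $\pi \in \Irr(M(F))$ there exists a unique triple $(L, \tau, \nu)$, up to $M(F)$-conjugacy, with $L$ a standard $F$-Levi, $\tau \in \Pi_{\temp}(L(F))$, and $\nu \in (\ma^{*}_L)^+$, such that $\pi$ is isomorphic to the unique irreducible quotient of the standard module $J(\pi) := \ii_{M,L}(\tau \otimes q^{\langle \nu, H_L(\cdot)\rangle})$; conversely, every such standard module has a unique irreducible quotient, namely this $\pi$. This produces the desired bijection $\pi \leftrightarrow J(\pi)$.

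Second, I would use the standard partial order on Langlands parameters: write $\pi' \prec \pi$ when the exponent $\nu'$ attached to $\pi'$ lies in the convex hull of the Weyl-orbit of $\nu$ minus $\{\nu\}$, or equivalently $\nu - \nu'$ is a nonnegative rational combination of simple roots in $\Delta_M \setminus \Delta_L$ and $\nu' \neq \nu$. The key analytic input, which one extracts from the Casselman--Jacquet module analysis (or Rodier's theorem, in Clozel's reference), is that every irreducible constituent $\pi'$ of $J(\pi)$ other than $\pi$ itself satisfies $\pi' \prec \pi$. Thus in $R(M)$ we can write
\[
J(\pi) = \pi + \sum_{\pi' \prec \pi} m(\pi, \pi') \, \pi',
\]
for some integers $m(\pi, \pi') \in \mathbb{Z}_{\geq 0}$.

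Third, I would conclude formally. The partial order $\prec$ has no infinite descending chains when one fixes the underlying supercuspidal support (only finitely many irreducibles share a given cuspidal support), so the matrix $(m(\pi,\pi'))$, indexed by $\Irr(M(F))$ and arranged by $\prec$, is (block) unitriangular. A unitriangular $\mathbb{Z}$-linear transformation of a basis of a free abelian group is invertible over $\mathbb{Z}$, hence $\{J(\pi) : \pi \in \Irr(M(F))\}$ is a $\mathbb{Z}$-basis of $R(M)$, and this set coincides (up to relabeling) with the set of standard modules. The main obstacle is the triangularity statement in the second step; it is not formal but rests on the genuine content of the Langlands classification, and I would simply invoke the proof in \cite{clo86} (based on Borel--Wallach/Silberger) rather than reprove it.
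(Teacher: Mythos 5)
The paper offers no proof of this proposition; it simply cites Clozel's Proposition 2 (p.\ 279 of \cite{clo86}). Your sketch correctly reconstructs the argument underlying that citation: Langlands classification gives the bijection $\pi \leftrightarrow J(\pi)$, the constituents of $J(\pi)$ other than $\pi$ are strictly smaller in the exponent partial order (a fact one gets from the Jacquet-module/Casselman analysis), only finitely many irreducibles share a given cuspidal support so the order has no infinite descending chains on the relevant blocks, and a unitriangular $\mathbb{Z}$-matrix is invertible over $\mathbb{Z}$. This is the same route the cited reference takes, so the proposal is consistent with the paper.
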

\begin{theorem}[Existence of generalized pseudo-coefficients] \label{phi}
For any $\sigma \in \Pi_{\disc}(M(F))$ and any regular function $\xi$ on $\Omega(\sigma),$ there exists a function $\phi_{\sigma, \xi} \in C^{\infty}_{c}(M(F))$ such that for all $\pi \in \Pi_{\temp}(M(F)),$
\[
  trace(\pi)(\phi_{\sigma, \xi}) = \left\{ 
  \begin{array}{l l}
    \xi(\pi), & \: \text{if} \quad \pi \s \sigma \otimes \psi \: \text{for some} \: \psi \in  \Psi(M), \\

    0, & \: \text{otherwise.}\\
  \end{array} \right.
\]
\end{theorem}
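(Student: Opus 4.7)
The plan is to apply the trace Paley--Wiener theorem (Proposition~\ref{pw}): I would exhibit a good linear functional $\lambda \colon R(M) \to \CC$ whose values on tempered representations realize the right-hand side of the claimed identity, so that $\phi_{\sigma, \xi}$ may be taken as any Hecke algebra function representing $\lambda$.

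The construction of $\lambda$ proceeds by means of the $\ZZ$-basis of standard modules from Proposition~\ref{standard}. On tempered irreducibles --- which are themselves standard modules with $L = M$ and $\nu = 0$ --- we must set $\lambda(\tau) = \xi(\tau)\,\mathbbm{1}_{\tau \in \Omega(\sigma)}$, as dictated by the conclusion of the theorem. The remaining freedom lies in the values on non-tempered standard modules $\ii_{M, L}(\tau \otimes q^{\langle \nu, H_L\rangle})$ with $L \subsetneq M$. I would choose these so that, on the single Bernstein component $\ic^{-1}(\ic(\sigma))$ containing $\sigma$, the function $\psi \mapsto \lambda\bigl(\ii_{M, L_\sigma}(\rho_\sigma \otimes \psi)\bigr)$ becomes a prescribed regular function on $\Psi(L_\sigma)$ compatible with $\xi$ on the tempered part (where $(L_\sigma, \rho_\sigma)$ is the cuspidal support of $\sigma$); on every other Bernstein component, set $\lambda \equiv 0$. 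Condition~(i) of Definition~\ref{good} then holds automatically, since $\lambda$ is supported on a single connected component of $\Irr(M(F))$.

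For condition~(ii), given a Levi $L_1 \subseteq M$ and $\pi \in R(L_1)$, transitivity of induction and $\ZZ$-linearity on the standard-module basis reduce the regularity of $\psi \mapsto \lambda(\ii_{M, L_1}(\pi \otimes \psi))$ on $\Psi(L_1)$ to that of the Paley--Wiener datum $\psi \mapsto \lambda(\ii_{M, L_\sigma}(\rho_\sigma \otimes \psi))$, which is regular by construction. Proposition~\ref{pw} then produces $\phi_{\sigma, \xi} \in C^\infty_c(M(F))$ with $\lambda(\pi) = \mathrm{tr}\,\pi(\phi_{\sigma, \xi})$ for every $\pi \in R(M)$; specializing to tempered $\pi$ --- itself a standard module with $L = M$, $\nu = 0$ --- gives the theorem. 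The main obstacle is extending $\xi$ regularly from $\Omega(\sigma) \simeq \Psi(M) / {\Stab}_{\Psi}(\sigma)$ to the larger component $\ic^{-1}(\ic(\sigma))$, which is parameterized (up to a finite group action) by $\Psi(L_\sigma)$, when $\sigma$ is not supercuspidal, and simultaneously ensuring that the $\lambda$-values forced on non-tempered constituents --- via inversion of the upper-triangular standard-to-irreducible transition --- evaluate to zero on all tempered sub-quotients lying outside $\Omega(\sigma)$.
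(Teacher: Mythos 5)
You have the right framework (trace Paley--Wiener via a good linear functional defined on the standard-module basis), but you misidentify the mechanism and introduce an obstacle that the argument does not actually face.

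The paper's construction does not try to extend $\xi$ regularly from $\Omega(\sigma)$ to the whole Bernstein component $\ic^{-1}(\ic(\sigma))$, nor does it leave ``freedom'' in the values on non-tempered standard modules that must be chosen compatibly. It simply sets $\lambda(\pi) = \xi(\pi)$ for $\pi \in \Omega(\sigma)$ and $\lambda(\pi) = 0$ for \emph{every} standard module $\pi \notin \Omega(\sigma)$; this is well-defined by Proposition~\ref{standard}. The decisive observation --- which your proposal skirts --- is that an essentially square-integrable representation of $M(F)$ can never be a standard module $\ii_{M,L}(\tau)$ with $L \subsetneq M$. Consequently every standard module properly induced from $L \subsetneq M$ lies outside $\Omega(\sigma)$, so $\lambda$ vanishes on all of them, and Definition~\ref{good}(ii) for proper $L$ becomes the trivial statement that $\psi \mapsto 0$ is regular on $\Psi(L)$. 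For $L = M$ a standard module is of the form $\tau \otimes \psi_{\nu}$ with $\tau \in \Pi_{\temp}(M(F))$; the full $\Psi(M)$-orbit of such a module is either contained in $\Omega(\sigma)$ (giving $\psi \mapsto \xi(\tau \otimes \psi)$, regular by hypothesis) or disjoint from it (giving $0$), so (ii) holds there as well. No extension of $\xi$ beyond $\Omega(\sigma)$ and no compatibility check across sub-quotients are ever needed.

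Your stated ``main obstacle'' --- ensuring that values forced on non-tempered constituents, via inversion of the standard-to-irreducible transition matrix, evaluate to zero on tempered sub-quotients outside $\Omega(\sigma)$ --- rests on a misreading of what needs computing. Every tempered irreducible $\pi$ of $M(F)$ is itself a standard module with $L = M$ and $\nu = 0$, so $\lambda(\pi)$ is assigned directly (as $\xi(\pi)$ or $0$), with no inversion required; the transition matrix is only relevant for evaluating $\lambda$ on non-standard, non-tempered irreducibles, whose values play no role in the theorem's conclusion. Likewise, your claim that condition (ii) reduces to regularity of $\psi \mapsto \lambda(\ii_{M, L_\sigma}(\rho_\sigma \otimes \psi))$ is off target: the supercuspidal support $(L_\sigma, \rho_\sigma)$ controls the Bernstein component (and hence condition (i)), but condition (ii) concerns the values of $\lambda$ on induced families from \emph{arbitrary} Levis, and these are handled by the vanishing-on-proper-$L$ argument, not by the cuspidal pair.
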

We call the function $\phi_{\sigma, \xi} \in C^{\infty}_{c}(M(F))$ \textit{a generalized pseudo-coefficient for $\sigma$ and $\xi$}.

\begin{remark}[Uniqueness of generalized pseudo-coefficients] \label{uniquely determined}
For given a regular function $\xi$ on $\Omega(\sigma),$ the function $\phi_{\sigma, \xi}$ is determined \textit{uniquely modulo the $\CC$-subspace $J_M$} spanned by the functions of the form $m \mapsto (h(m)-h(xmx^{-1}))$ for all $h \in C^{\infty}_{c}(M(F))$ and $x \in M(F).$ This follows from the fact (\cite[Theorem 0]{kaz}) that 
\[
\phi_1 - \phi_2 \in J_M \; \Longleftrightarrow \; trace (\pi)(\phi_1)=trace (\pi)(\phi_2), \quad \forall \pi \in \Pi_{\temp}(M(F)) ~
.
\]
\end{remark}
\begin{remark} 
We note that, since $\pi \in \Pi_{\temp}(M(F))$ is unitary, the unramified character $\psi$ satisfying $\pi \s \sigma \otimes \psi$ must be unitary, that is, in $\Psi_u(M).$
\end{remark}

\begin{proof}[Proof of Theorem \ref{phi}]
We refer to Clozel's proof of \cite[Proposition 1]{clo86}. Given a regular function $\xi$ on $\Omega(\sigma),$ we define a linear functional $\lambda : R(M) \rightarrow \CC$ such that 
\[
  \lambda(\pi) = \left\{ 
  \begin{array}{l l}
    \xi(\pi), & \: \text{if} \quad \pi \s \sigma \otimes \psi \: \text{for some} \: \psi \in  \Psi(M), \\

    0, & \: \text{if} \quad \pi \: \text{is a standard module but not in} \: \: \Omega(\sigma). \\
  \end{array} \right.
\]
Since essentially square-integrable representations can never be realized by an induced representation from a proper $F$-Levi subgroup and by Proposition \ref{standard}, the linear functional $\lambda$ is well-defined.

Now we show that $\lambda$ is \textit{good} (see Proposition \ref{pw}). If the image $\ic(\sigma)$ is $(L, \rho)$ for some standard $F$-Levi subgroup $L$ of $M$ and a supercuspidal representation $\rho$ of $L(F),$ then $\ic$ maps $\Omega(\sigma)$ into the connected component $\Theta$ containing $(L, \rho).$ Indeed, since the map $\ic$  is $\Psi(M)$-equivariant, we have $\ic(\Psi(M) \sigma) = (L, \Psi(M) \rho) \subset (L, \Psi(L) \rho)= \Theta.$ It follows that the linear functional is supported only on $\Theta,$ so the condition (i) of Definition \ref{good} is fulfilled. 

For the condition (ii) of Definition \ref{good}, it suffices to show that $\lambda$ vanishes on any induced representation $\ii_{M,L}(\tau)$ with proper standard $F$-Levi subgroup $L$ of $M$ and $\tau \in R(L).$ By Proposition \ref{standard}, we may assume that $\tau$ is a standard module $\ii_{L,L_1} (\varsigma),$ where $L_1$ is a standard $F$-Levi subgroup of $L$ and $\varsigma \s \varsigma_1 \otimes q ^{\langle  \nu_1, H_{L_1}( \,) \rangle}$ for some $\varsigma_1 \in \Pi_{\temp}(L_1(F))$ and $\nu_1 \in (\ma^{*}_{L_1})^{+}.$ Then $\ii_{M,L}(\tau)$ is isomorphic to $\ii_{M,L_1} (\varsigma)$ which is a standard module but not an essentially square-integrable representation (since $L_1$ is proper), that is, $\ii_{M,L}(\tau) \notin \Omega(\sigma).$ It turns out that $\lambda(\ii_{M,L}(\tau))=0$ for any proper standard $F$-Levi subgroup $L$ of $M$ and any $\tau \in R(L).$ Hence, the condition (ii) also follows, and $\lambda$ is \textit{good}. Due to the trace Paley-Wiener theorem (Proposition \ref{pw}), the proof is complete.
\end{proof}

\subsection{Properties of $\phi_{\sigma, \xi}$} \label{properties}
We provide several properties of the generalized pseudo-coefficient $\phi_{\sigma, \xi}$ for $\sigma$ and $\xi$ defined in Theorem \ref{phi}.  
Denote by $\omega$ a unitary character of $Z_M(F),$ where $Z_M$ is the center of $M.$ 
By $C^{\infty}_{c}(M(F), \omega^{-1})$ we denote the Hecke algebra of locally constant functions with compact support modulo the center and transforming under the center by $\omega^{-1}.$ Let $\Pi_{\temp}(M(F), \omega)$ and $\Pi_{\disc}(M(F), \omega)$ be the sets of equivalent classes of irreducible tempered representations and discrete series representations of $M(F),$ respectively, whose central character is $\omega.$

Let $\sigma \in \Pi_{\disc}(M(F), \omega)$ be given.
The projection $\phi_{\sigma, \xi}^{\omega}$ of $\phi_{\sigma, \xi} \in C^{\infty}_{c}(M(F))$ onto $C^{\infty}_{c}(M(F), \omega^{-1})$ is defined by
\begin{equation} \label{projection}
\phi_{\sigma, \xi}^{\omega}(m) := \int _{Z_M(F)} \omega(z) \phi_{\sigma, \xi}(zm) dz, 
\end{equation}
where $dz$ is a Haar measure on $Z_M(F).$  

Given $h \in C^{\infty}_{c}(M(F)),$ the orbital integral $\mathit{O}^{M(F)}_{\gamma}(h)$ of $h$ for $\gamma \in M(F)$ is defined by
\begin{equation} \label{def orbital int}
\mathit{O}^{M(F)}_{\gamma}(h):= \int _{M_{\gamma}^\circ(F) \backslash M(F)} h(x^{-1} \gamma x) d x,
\end{equation} 
where $d x$ is a quotient Haar measure on the quotient $M_{\gamma}^\circ(F)\backslash M(F).$ 
Given a character $\omega$ of $Z_M(F),$ we define $\mathit{O}^{M(F)}_{\gamma}(h^{\omega})$ for $h^{\omega} \in C^{\infty}_{c}(M(F), \omega^{-1})$ in the same way.

We present the cuspidal properties of $\phi_{\sigma, \xi}$ and $\phi^{\omega}_{\sigma, \xi}.$
\begin{proposition}[Cuspidality of $\phi_{\sigma, \xi}$ and $\phi^{\omega}_{\sigma, \xi}$] \label{cuspidal property}
With the function $\phi_{\sigma, \xi} \in C^{\infty}_{c}(M(F))$ in Theorem \ref{phi} and its projection $\phi_{\sigma, \xi}^{\omega}$ defined in \eqref{projection}, 
we have
\[
\mathit{O}^{M(F)}_{\gamma}(\phi_{\sigma, \xi}) = 0 = \mathit{O}^{M(F)}_{\gamma}(\phi_{\sigma, \xi}^{\omega})
\]
for any $\gamma \in M(F)^{\reg} \smallsetminus M(F)^{\el}.$
\end{proposition}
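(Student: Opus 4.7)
The plan is to combine Harish-Chandra's parabolic descent for orbital integrals with Kazhdan's density theorem, exploiting the fact that no discrete series of $M(F)$ appears as a subquotient of a representation parabolically induced from a proper Levi. Since $\gamma \in M(F)^{\reg} \smallsetminus M(F)^{\el}$, the quotient $M_{\gamma}^{\circ} / Z_M^{\circ}$ is not anisotropic over $F$; thus the split component of $M_{\gamma}^{\circ}$ strictly contains $A_M$, and $\gamma$ lies in some proper standard $F$-Levi subgroup $L \subsetneq M$. Fix a standard $F$-parabolic $Q = LU$ of $M$ with Levi factor $L$.

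First I would apply the parabolic descent (constant-term) formula for orbital integrals
\[
|D_{M/L}(\gamma)|^{1/2} \mathit{O}^{M(F)}_{\gamma}(\phi_{\sigma, \xi}) = \mathit{O}^{L(F)}_{\gamma}(\phi_{\sigma, \xi}^{(Q)}),
\]
where $\phi_{\sigma, \xi}^{(Q)} \in C^{\infty}_{c}(L(F))$ is the normalized constant term of $\phi_{\sigma, \xi}$ along $Q$; regularity of $\gamma$ guarantees $|D_{M/L}(\gamma)| \neq 0$, so it suffices to show the right-hand side vanishes.

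Next I would use the van Dijk/Casselman trace identity relating the constant term to parabolic induction: for every $\tau \in \Pi_{\temp}(L(F))$,
\[
trace(\tau)(\phi_{\sigma, \xi}^{(Q)}) = trace(\ii_{M,L}(\tau))(\phi_{\sigma, \xi}).
\]
Because $L \subsetneq M$ is proper and essentially square-integrable representations are never constituents of modules parabolically induced from proper Levis (the very fact used for condition (ii) in the proof of Theorem \ref{phi}), no irreducible subquotient of $\ii_{M,L}(\tau)$ lies in $\Omega(\sigma)$. Theorem \ref{phi} then yields $trace(\ii_{M,L}(\tau))(\phi_{\sigma, \xi}) = 0$ for every $\tau \in \Pi_{\temp}(L(F))$. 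Consequently $\phi_{\sigma, \xi}^{(Q)} \in C^{\infty}_{c}(L(F))$ has vanishing trace on all tempered representations of $L(F)$. By Kazhdan's density theorem (see Remark \ref{uniquely determined}), $\phi_{\sigma, \xi}^{(Q)}$ lies in the commutator subspace $J_L$, so every orbital integral of it at a regular semisimple element of $L(F)$ vanishes; in particular $\mathit{O}^{L(F)}_{\gamma}(\phi_{\sigma, \xi}^{(Q)}) = 0$, giving the first equality.

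For $\mathit{O}^{M(F)}_{\gamma}(\phi_{\sigma, \xi}^{\omega}) = 0$, I would repeat the argument inside the $\omega$-equivariant Hecke algebra $C^{\infty}_{c}(M(F), \omega^{-1})$, using the $C^{\infty}_{c}(L(F), \omega^{-1})$-version of the constant term and of Kazhdan's density; the central projection defined in \eqref{projection} commutes with the constant term, and the trace identity above still holds for $\tau \in \Pi_{\temp}(L(F), \omega)$. The main obstacle I anticipate is citing a clean parabolic descent and an $\omega$-equivariant Kazhdan density in the right form, but both are standard variants (cf.\ \cite{hc73}, \cite{kaz}), and the non-induction property of the twists $\sigma \otimes \psi$ is clearly unaffected by restricting to $\omega^{-1}$-equivariant functions.
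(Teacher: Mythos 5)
Your argument for $\mathit{O}^{M(F)}_{\gamma}(\phi_{\sigma, \xi}) = 0$ is exactly the paper's: observe that a non-elliptic regular $\gamma$ lies in a proper standard $F$-Levi $L \subsetneq M$, apply Harish-Chandra/van Dijk parabolic descent to reduce to an orbital integral of the constant term $\bar{\phi}_{\sigma,\xi}^{(Q)}$ on $L(F)$, use the trace identity $trace(\tau)(\bar{\phi}_{\sigma,\xi}^{(Q)}) = trace(\ii_{M,L}\tau)(\phi_{\sigma,\xi})$, note this vanishes by Theorem \ref{phi} since no essentially square-integrable representation is a subquotient of a representation induced from a proper Levi, and finally invoke Kazhdan's density theorem. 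The only divergence is in the $\omega$-part. You propose to rerun the whole machinery inside $C^{\infty}_{c}(M(F),\omega^{-1})$, citing $\omega$-equivariant analogues of parabolic descent and Kazhdan density; this is workable but heavier than needed. The paper instead reads off directly from the definition \eqref{projection} that
\[
\mathit{O}^{M(F)}_{\gamma}(\phi_{\sigma, \xi}^{\omega}) = \int_{Z_M(F)} \omega(z)\, \mathit{O}^{M(F)}_{z \gamma}(\phi_{\sigma, \xi})\, dz,
\]
and since multiplying by a central element $z$ changes neither the centralizer $M_\gamma^\circ$ nor its split component, $z\gamma$ remains regular and non-elliptic, so the integrand vanishes pointwise by the first equality. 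That makes the $\omega$-statement an immediate corollary rather than a parallel argument, and it avoids having to justify that the constant term commutes with the central projection and that the $\omega$-equivariant density theorem holds in the exact form you want.
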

\begin{proof}
We first notice from Definition \eqref{projection} that 
\[
\mathit{O}^{M(F)}_{\gamma}(\phi_{\sigma, \xi}^{\omega}) = \int_{Z_M(F)} \omega(z) \mathit{O}^{M(F)}_{z \gamma}(\phi_{\sigma, \xi})dz.
\]
It is sufficient to show that for every $\gamma \in M(F)^{\reg} \smallsetminus M(F)^{\el},$
\[
\mathit{O}^{M(F)}_{\gamma}(\phi_{\sigma, \xi})=0.
\]
Let $\gamma \in M(F)^{\reg} \smallsetminus  M(F)^{\el}$ be given. Then there exists a proper standard $F$-Levi subgroup $L$ of $M$ containing $\gamma.$ 
We denote by $Q$ the standard $F$-parabolic subgroup in $M$ with its Levi factor $L.$ 
We consider the following parabolic descent $\bar{\phi}_{\sigma, \xi}^{(Q)}$ of $\phi_{\sigma, \xi}$ which is due to Harish-Chandra (cf. \cite[p.236]{dij72} and \cite[Section 13]{kot05}). By \cite[Lemma 9]{dij72}, we have 
\begin{equation} \label{orbital int for par descent}
|D_{M/L}(\gamma)| ^{1/2} \mathit{O}^{M(F)}_{\gamma}(\phi_{\sigma, \xi}) 
= \mathit{O}^{L(F)}_{\gamma}(\bar{\phi}_{\sigma, \xi}^{(Q)}),
\end{equation}
where $D_{M/L}(\gamma) = \det(1-ad(\gamma)) |_{\Lie(M)/\Lie(L)}.$ We claim that 
\[
\mathit{O}^{L(F)}_{\gamma}(\bar{\phi}_{\sigma, \xi}^{(Q)})=0
\]
for all $\gamma \in L(F).$
It is well-known (cf. \cite[p.237]{dij72} and \cite[Section 13.7]{kot05}) that 
\[
trace(\pi)(\bar{\phi}_{\sigma, \xi}^{(Q)})
=trace(\ii_{M,L}(\pi))(\phi_{\sigma, \xi})
\] 
for any $\pi \in \Irr(L(F)).$
Since $L$ is proper and the orbit $\Omega(\sigma)$ consists only of essentially square-integrable representations, $\ii_{M,L}(\pi)$ cannot be in $\Omega(\sigma).$ Theorem \ref{phi} implies that 
\begin{equation} \label{trace of phi vanishing}
trace(\ii_{M,L}(\pi))(\phi_{\sigma, \xi})=0
\end{equation}
for any $\pi \in \Irr(L(F)).$ It follows that $trace(\pi)(\bar{\phi}_{\sigma, \xi}^{(Q)})=0.$ Therefore, the claim is true from the fact (\cite[Theorem 0]{kaz}) that 
\[
trace(\pi)(\bar{\phi}_{\sigma, \xi}^{(Q)})=0 \quad \text{for all} ~ \pi \in \Pi_{\temp}(L(F))
\] 
if and only if 
\[ 
\mathit{O}^{L(F)}_{\gamma}(\bar{\phi}_{\sigma, \xi}^{(Q)})=0 \quad \text{for all}~ \gamma \in M(F)^{\reg}.
\] 
Since $D_{M/L}$ is never zero for any element in $M(F)^{\reg},$ the proof is complete.
\end{proof}

\begin{remark}
Proposition \ref{cuspidal property} is called the Selberg principle (cf. \cite[Lemma 45]{hc162}). 
Furthermore, it turns out that $\phi_{\sigma, \xi}$ is a cuspidal function in the sense of Arthur (see \cite[p.130]{art93}).
\end{remark}

We give a relationship between the orbital integral of  $\phi_{\sigma, \xi}^{\omega}$ and the Harish-Chandra character $\Theta_{\sigma}$ (defined below) over the elliptic regular semi-simple set $M(F)^{\el}.$ 

\begin{definition} (Harish-Chandra, \cite[p.96]{hc81}) \label{hc character}
For any $\pi \in \Irr(M(F)),$ there exists a unique locally constant function on $M(F)^{\reg}$ which is invariant under conjugation by $M(F)$ such that for all $ h \in C^{\infty}_{c}(M(F)),$
\[
trace (\pi) (h) = \int_{M(F)^{\reg}} h(m) \Theta_{\pi}(m) dm,
\]
with $dm$  a Haar measure on $M(F)^{\reg}.$ We call $\Theta_{\pi},$ \textit{the Harish-Chandra character} of $\sigma.$
\end{definition}
We note that the distribution character $trace (\pi)$ is dependent upon the choice of Haar measure $dm,$ but the Harish-Chandra character $\Theta_{\pi}$ is not.
Furthermore, Definition \ref{hc character} stems from a theorem of Harish-Chandra, which proves the existence and uniqueness (see \cite{hculs} for details).

We present the following lemma to state Proposition \ref{pro for relation bw character and orb int}.
\begin{lemma} \label{some lemma for finiteness}
For any $\sigma \in \Pi_{\disc}(M(F), \omega),$ the set $\Omega(\sigma) \cap \Pi_{\disc}(M(F), \omega)$ is finite.
\end{lemma}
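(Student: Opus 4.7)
The plan is to identify $\Omega(\sigma) \cap \Pi_{\disc}(M(F), \omega)$ as the image of a finite group under the twisting map $\psi \mapsto \sigma \otimes \psi$, where the finiteness of the source will follow from Remark \ref{properties of M/M1}(v). Specifically, set
\[
\Psi(M)^{Z_M} := \lbrace \psi \in \Psi(M) : \psi|_{Z_M(F)} = 1 \rbrace.
\]
First I would observe that the central character of $\sigma \otimes \psi$ restricted to $Z_M(F)$ equals $\omega \cdot \psi|_{Z_M(F)}$, so $\sigma \otimes \psi$ has central character $\omega$ if and only if $\psi \in \Psi(M)^{Z_M}$.

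Next I would show that $\Psi(M)^{Z_M}$ is finite. Under the identification $\Psi(M) \s \Hom(M(F)/M(F)^1, \CC^\times)$ from Section \ref{unram}, the subgroup $\Psi(M)^{Z_M}$ corresponds to characters trivial on the image of $Z_M(F)$ in $M(F)/M(F)^1$, namely $Z_M(F)/(Z_M(F) \cap M(F)^1)$. By Remark \ref{properties of M/M1}(v), that image is a full rank sublattice of $M(F)/M(F)^1 \s \ZZ^d$, so the quotient $M(F)/(Z_M(F) M(F)^1)$ is a finite abelian group, and its Pontryagin dual $\Psi(M)^{Z_M}$ is therefore finite. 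In particular, every $\psi \in \Psi(M)^{Z_M}$ has finite order and is hence unitary, that is, $\Psi(M)^{Z_M} \subseteq \Psi_u(M)$.

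It remains to check that $\sigma \otimes \psi \in \Pi_{\disc}(M(F), \omega)$ for any $\psi \in \Psi(M)^{Z_M}$. Unitarity of $\sigma \otimes \psi$ follows from unitarity of $\sigma$ and of $\psi$, and square-integrability of the matrix coefficients modulo the center is preserved because twisting by $\psi$ multiplies each matrix coefficient of $\sigma$ by the unitary function $\psi$. Conversely, any element of $\Omega(\sigma) \cap \Pi_{\disc}(M(F), \omega)$ is of the form $\sigma \otimes \psi$ for some $\psi \in \Psi(M)$, and the central character constraint forces $\psi \in \Psi(M)^{Z_M}$. The map $\psi \mapsto \sigma \otimes \psi$ from $\Psi(M)^{Z_M}$ onto $\Omega(\sigma) \cap \Pi_{\disc}(M(F), \omega)$ is therefore surjective (with fibers given by cosets of ${\Stab}_{\Psi}(\sigma) \cap \Psi(M)^{Z_M}$), and finiteness of the source transfers to the target.

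The single substantive step is the invocation of Remark \ref{properties of M/M1}(v); once the full-rank sublattice property is in hand, the rest is a routine application of Pontryagin duality combined with the standard stability of the discrete series under unitary unramified twists, so I do not anticipate any genuine obstacle.
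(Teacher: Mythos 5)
Your proof is correct and uses essentially the same argument as the paper: both hinge on the observation that any $\psi$ with $\sigma\otimes\psi\in\Pi_{\disc}(M(F),\omega)$ must be trivial on $Z_M(F)M(F)^1$, and then invoke Remark~\ref{properties of M/M1}(v) to conclude that there are only finitely many such characters. You spell out the Pontryagin-duality step and the discrete-series stability under unitary twists more explicitly than the paper does, but the route is the same.
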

\begin{proof}
Let $\pi \in \Pi_{\disc}(M(F), \omega)$ be given such that $\pi \s \sigma \otimes \psi$ for some $\psi \in \Psi(M).$ Since both central characters of $\sigma$ and $\pi$ are the same as $\omega,$ the restriction $\psi|_{Z_M(F)}$ of $\psi$ to $Z_M(F)$ is trivial. 
The set $\Omega(\sigma) \cap \Pi_{\disc}(M(F), \omega)$ is a subset of
\begin{equation} \label{some finite set}
\lbrace
\psi \in \Psi_u(M) : \psi | _{Z_M(F)} = \mathbbm{1}
\rbrace.
\end{equation}
Note that $\psi$ is already trivial on $M(F)^1.$ Since the index $[M(F) : Z_M(F) M(F)^1]$ is finite from Remark \ref{properties of M/M1}, there are only finitely many characters on $M(F)$ whose restriction to $Z_M(F)M(F)^1$ is trivial. Thus the set \eqref{some finite set} is finite, which implies that the set $\Omega(\sigma) \cap \Pi_{\disc}(M(F), \omega)$ is finite.
\end{proof}

\begin{proposition} [Relationship between $\phi_{\sigma, \xi}$ and $\Theta_{\sigma}$] \label{pro for relation bw character and orb int} 
Let $\sigma \in \Pi_{\disc}(M(F), \omega)$ be given, and let $\xi$ be a regular function on $\Omega(\sigma).$ Suppose that $\xi(z)$ vanishes on $\Omega(\sigma) \cap \Pi_{\disc}(M(F), \omega)$ unless $z  \s \sigma.$ Then we have
\begin{equation} \label{equ for ch and or}
\mathit{O}^{M(F)}_{\gamma}(\phi_{\sigma, \xi}^{\omega}) = \xi(\sigma) \overline{\Theta_{\sigma}(\gamma)}
\end{equation}
for every $\gamma \in M(F)^{\el}.$ Here the bar $\, \bar{~~~} \,$ means the complex conjugation.
\end{proposition}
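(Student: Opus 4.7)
The plan is to reduce the identity to the classical character--orbital-integral relationship for an ordinary pseudo-coefficient $\phi_{\sigma}^{\omega} \in C^{\infty}_{c}(M(F),\omega^{-1})$ of $\sigma$ (guaranteed by \cite{clo89}, \cite[A.4]{dkv}), which satisfies $trace(\pi)(\phi_{\sigma}^{\omega}) = \delta_{\pi,\sigma}$ for every $\pi \in \Pi_{\temp}(M(F),\omega)$, together with the Kazhdan--Schneider--Stuhler identity
\[
\mathit{O}^{M(F)}_{\gamma}(\phi_{\sigma}^{\omega}) = \overline{\Theta_{\sigma}(\gamma)}, \qquad \gamma \in M(F)^{\el},
\]
from \cite{kaz}, \cite{ss} (transplanted verbatim to the central-character setting). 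The task is then to show that, under the hypothesis on $\xi$, the generalized pseudo-coefficient $\phi_{\sigma,\xi}^{\omega}$ coincides with $\xi(\sigma)\,\phi_{\sigma}^{\omega}$ up to a function whose regular semi-simple orbital integrals vanish.

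First I would check that for every $\pi \in \Pi_{\temp}(M(F),\omega)$,
\[
trace(\pi)(\phi_{\sigma,\xi}^{\omega}) = trace(\pi)(\phi_{\sigma,\xi}).
\]
This is a routine unfolding: write the left-hand side as $\int_{Z_M(F)\backslash M(F)} \phi_{\sigma,\xi}^{\omega}(m)\Theta_{\pi}(m)\, dm$, substitute definition \eqref{projection}, change the order of integration, and use the equivariance $\Theta_{\pi}(zm) = \omega(z)\Theta_{\pi}(m)$ to cancel the $\omega$-factors, recovering $\int_{M(F)} \phi_{\sigma,\xi}(m)\Theta_{\pi}(m)\, dm$. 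Theorem \ref{phi} then evaluates the right-hand side: it is $\xi(\pi)$ if $\pi \s \sigma \otimes \psi$ for some $\psi \in \Psi(M)$, and $0$ otherwise. Such a tempered $\pi$ with central character $\omega$ is automatically a discrete series in the finite set $\Omega(\sigma)\cap \Pi_{\disc}(M(F),\omega)$ of Lemma \ref{some lemma for finiteness}, so the hypothesis on $\xi$ forces $\xi(\pi)=0$ unless $\pi \s \sigma$. Therefore
\[
trace(\pi)(\phi_{\sigma,\xi}^{\omega}) = \xi(\sigma)\,\delta_{\pi,\sigma}, \qquad \pi \in \Pi_{\temp}(M(F),\omega).
\]

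Next I would compare $\phi_{\sigma,\xi}^{\omega}$ with $\xi(\sigma)\,\phi_{\sigma}^{\omega}$. The difference has vanishing trace against every tempered representation of $M(F)$ with central character $\omega$, so Kazhdan's density theorem in the central-character setting (the routine adaptation of \cite[Theorem 0]{kaz} used in \cite{clo89}, \cite[A.4]{dkv}) forces all of its regular semi-simple orbital integrals to vanish. Substituting into the classical identity recalled in the first paragraph yields \eqref{equ for ch and or} for every $\gamma \in M(F)^{\el}$.

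The most delicate point will be matching normalizations: both Kazhdan's density theorem and the identity $\mathit{O}^{M(F)}_{\gamma}(\phi_{\sigma}^{\omega}) = \overline{\Theta_{\sigma}(\gamma)}$ depend on the Haar measures on $Z_M(F)$, on the conjugacy classes, and on the construction of $\phi_{\sigma}^{\omega}$ via matrix coefficients and the formal degree $d_M(\sigma)$. With the measure conventions fixed in Section \ref{def of PM} and the standard normalization of $\phi_{\sigma}^{\omega}$ from \cite{clo89}, \cite[A.4]{dkv}, these inputs are compatible and the reduction above produces the stated equality without further adjustment.
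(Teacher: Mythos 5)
Your argument is correct and follows essentially the same route as the paper's. Both reduce the proposition to the trace identity $trace(\pi)(\phi^\omega_{\sigma,\xi}) = \xi(\sigma)\delta_{\pi,\sigma}$ for $\pi\in\Pi_{\temp}(M(F),\omega)$ (which uses the hypothesis on $\xi$, Theorem \ref{phi}, and Lemma \ref{some lemma for finiteness} exactly as you describe), and then invoke the central-character version of Kazhdan's density theorem (the paper cites \cite[A.2.b]{dkv}) to compare orbital integrals with a reference function whose elliptic orbital integrals are $\overline{\Theta_\sigma(\gamma)}$. The only real difference is packaging: you take Clozel's pseudo-coefficient $\phi^\omega_\sigma$ as a black box and compare $\phi^\omega_{\sigma,\xi}$ with $\xi(\sigma)\phi^\omega_\sigma$, while the paper constructs the reference function directly as a truncated matrix coefficient $\widetilde{h^\omega_\sigma}$ normalized by $h^\omega_\sigma(1)=\xi(\sigma)d_M(\sigma)$ and quotes \cite[Proposition A.3.e.2) and A.3.g]{dkv} together with the Weyl integration formula; the black-box fact you cite is then recorded separately as Proposition \ref{clozel}, so there is no circularity. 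One small attribution note: the identity $\mathit{O}^{M(F)}_{\gamma}(\phi^\omega_\sigma)=\overline{\Theta_\sigma(\gamma)}$ in the mod-center setting is due to Clozel \cite[Propositions 5(ii), 6]{clo89} (as the paper and \cite[A.4]{dkv} record), rather than literally to \cite{kaz} or \cite{ss}, which treat the compact-center/semisimple cases.
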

\begin{remark}
The orbital integral in the left-hand side is computed with respect to the Haar measure $dx$ in Definition \eqref{def orbital int}. Changing $dx$ to $c \, dx$ for some non-zero constant $c,$ we then get $c^{-1} \, \phi_{\sigma, \xi}$ from Proposition \ref{phi} in place of $\phi_{\sigma, \xi}.$ Thus the left-hand side is independent of choice of Haar measure, as the right-hand side should be. Note that the right-hand side depends only on $\xi$ and $\sigma.$
\end{remark}
\begin{proof}[Proof of Proposition \ref{pro for relation bw character and orb int}]
For any matrix coefficient $h^{\omega}_{\sigma}$ of $\sigma,$ from the proof of \cite[Proposition 6]{clo89}, there exists $\widetilde{h^{\omega}_{\sigma}} \in C^{\infty}_{c}(M(F), \omega^{-1})$ such that
\begin{equation} \label{two orbital integrals}
\mathit{O}^{M(F)}_{\gamma}(\widetilde{h^{\omega}_{\sigma}})
= \mathit{O}^{M(F)}_{\gamma}(h^{\omega}_{\sigma}) 
, \quad \forall \gamma \in M(F).
\end{equation}
We recall the Weyl integration formula (cf. \cite[A.3.f]{dkv} and \cite[Section 7]{kot05}).
For any function $h^{\omega} \in C^{\infty}_{c}(M(F), \omega^{-1})$ and any $\pi \in \Pi_{\temp}(M(F), \omega),$
\[
trace(\pi)(h^{\omega})= \dsum_{T} |W_T|^{-1} \int_{Z_M(F) \backslash T(F)^{\reg}} 
|D_M(t)| \mathit{O}^{M(F)}_{\gamma}(h^{w}) \Theta_{\pi}(t) dt,
\]
where the sum is taken over $M(F)$-conjugacy classes of maximal $F$-tori $T$ in $M,$  $W_T:=N_M(T) / Z_M(T),$ $T(F)^{\reg} := M(F)^{\reg} \cap T(F),$ $D_M(t):=\det(1-ad(t)) |_{\Lie(M)/\Lie(T)},$ and $dt$ is a Haar measure on $Z_M(F) \backslash T(F)^{\reg}.$
From \eqref{two orbital integrals} and the Weyl integration formula, we deduce that
\begin{equation} \label{two traces}
  trace(\pi)(\widetilde{h^{\omega}_{\sigma}}) 
  = trace(\pi)(h^{\omega}_{\sigma})
\end{equation} 
for any $\pi \in \Pi_{\temp}(M(F), \omega).$ Moreover, it is well-known [DKV84, Proposition A.3.g] that 
\begin{equation} \label{trace values}
trace(\pi)(h^{\omega}_{\sigma}) =
  \left\{ 
  \begin{array}{l l}
    \dfrac{h^{\omega}_{\sigma}(1)}{d_M(\sigma)}, & \: \text{if} \, \, \pi  \s  \sigma, \\

    \quad 0, & \: \text{otherwise.}\\
  \end{array} \right.
\end{equation} 
Here $d_M(\sigma)$ is the formal degree. We also recall from \cite[Proposition A.3.e.2)]{dkv} that for all $\gamma \in M(F)^{\reg},$ we have
\begin{equation} \label{matrix coeff and character}
\mathit{O}^{M(F)}_{\gamma}(h^{\omega}_{\sigma}) =
  \left\{ 
  \begin{array}{l l}
    \dfrac{h^{\omega}_{\sigma}(1)}{d_M(\sigma)} \; \overline{\Theta_{\sigma}(\gamma)},
     & \: \text{if} \, \, \gamma \in M(F)^{\el}, \\

    \quad 0, & \: \text{if} \, \, \gamma \in M(F)^{\reg} \smallsetminus M(F)^{\el}.\\
  \end{array} \right.
\end{equation}
For any given regular function $\xi$ on $\Omega(\sigma),$ we choose a matrix coefficient $h^{\omega}_{\sigma}$ of $\sigma$ such that 
\begin{equation} \label{value at 1}
h^{\omega}_{\sigma}(1)=\xi(\sigma) d_M(\sigma).
\end{equation}
It then follows from \eqref{two orbital integrals} and \eqref{matrix coeff and character} that
\[
\mathit{O}^{M(F)}_{\gamma}(\widetilde{h^{\omega}_{\sigma}}) = \xi(\sigma) \overline{\Theta_{\sigma}(\gamma)}
\]
for all $\gamma \in M(F)^{\el}.$ We also have the following.
\begin{lemma} \label{lemma}
For any $\pi \in \Pi_{\temp}(M(F), \omega),$ 
\[
trace(\pi)(\phi_{\sigma, \xi}^{\omega}) = trace(\pi)(\widetilde{h^{\omega}_{\sigma}}).
\]
\end{lemma}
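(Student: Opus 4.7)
The plan is to evaluate both $trace(\pi)(\phi_{\sigma,\xi}^{\omega})$ and $trace(\pi)(\widetilde{h^{\omega}_{\sigma}})$ explicitly as functions of $\pi \in \Pi_{\temp}(M(F),\omega)$ and to show that they take the same value at every such $\pi$.

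First I would establish the general compatibility between the trace of a tempered representation with central character $\omega$ against $\phi_{\sigma,\xi}$ and against its projection $\phi_{\sigma,\xi}^{\omega}$. Since $\pi(zm)=\omega(z)\pi(m)$ for $z\in Z_M(F)$, a Fubini manipulation using the definition \eqref{projection} together with the change of variables $m'=zm$ collapses the double integral over $Z_M(F)\times(M(F)/Z_M(F))$ back to an integral over $M(F)$, yielding
\begin{equation*}
trace(\pi)(\phi_{\sigma,\xi}^{\omega}) = trace(\pi)(\phi_{\sigma,\xi}).
\end{equation*}

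Next I would apply Theorem \ref{phi} to this right-hand side: it equals $\xi(\pi)$ when $\pi \s \sigma\otimes\psi$ for some $\psi\in\Psi(M)$ and vanishes otherwise. The decisive structural step is to argue that in our situation such a $\psi$ is forced to be unitary and $\pi$ is forced to be a discrete series with central character $\omega$. Since $\sigma$ and $\pi$ have the same central character $\omega$, the restriction $\psi|_{Z_M(F)}$ is trivial; and $\psi$ is already trivial on $M(F)^1$ by definition of $\Psi(M)$. By Remark \ref{properties of M/M1}(v) the quotient $M(F)/(Z_M(F)M(F)^1)$ is finite, so $\psi$ factors through a finite group and therefore lies in $\Psi_u(M)$. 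Consequently $\pi=\sigma\otimes\psi$ belongs to $\Omega(\sigma)\cap\Pi_{\disc}(M(F),\omega)$, because twisting a discrete series by a unitary character preserves both square-integrability modulo the center and the central character. The hypothesis that $\xi$ vanishes on $\Omega(\sigma)\cap\Pi_{\disc}(M(F),\omega)$ except at $\sigma$ then gives
\begin{equation*}
trace(\pi)(\phi_{\sigma,\xi}^{\omega}) = \begin{cases} \xi(\sigma), & \text{if } \pi \s \sigma, \\ 0, & \text{otherwise.} \end{cases}
\end{equation*}

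For the other side, I would combine \eqref{two traces} with \eqref{trace values} to obtain $trace(\pi)(\widetilde{h^{\omega}_{\sigma}}) = trace(\pi)(h^{\omega}_{\sigma})$, which equals $h^{\omega}_{\sigma}(1)/d_M(\sigma)$ when $\pi\s\sigma$ and vanishes otherwise. The normalization \eqref{value at 1}, which was built into the choice of matrix coefficient, makes this precisely $\xi(\sigma)$ when $\pi\s\sigma$ and $0$ otherwise, matching the formula established above. The main obstacle is the dichotomy step promoting "$\pi\s\sigma\otimes\psi$ with $\pi\in\Pi_{\temp}(M(F),\omega)$" to "$\pi\in\Omega(\sigma)\cap\Pi_{\disc}(M(F),\omega)$"; without this promotion the vanishing hypothesis on $\xi$, which is phrased only over the discrete series with central character $\omega$, would not eliminate the unwanted twists of $\sigma$.
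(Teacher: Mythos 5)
Your proof is correct and follows essentially the same path as the paper's. The paper's version is terser: it simply invokes Theorem \ref{phi} together with the vanishing hypothesis on $\xi$ to obtain \eqref{trace with special xi}, implicitly using the identity $trace(\pi)(\phi_{\sigma,\xi}^{\omega}) = trace(\pi)(\phi_{\sigma,\xi})$ for $\pi$ of central character $\omega$ and the fact (noted in the remark following Theorem \ref{phi}) that any $\psi$ with $\sigma\otimes\psi$ tempered must be unitary; you spell both of these out, and additionally give an alternative finite-order argument for the unitarity of $\psi$ using Remark \ref{properties of M/M1}(v), which is the same mechanism the paper uses in Lemma \ref{some lemma for finiteness}.
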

\begin{proof}[Proof of Lemma \ref{lemma}]
Since $\xi(z)$ vanishes on $\Omega(\sigma) \cap \Pi_{\disc}(M(F), \omega)$ unless $z \s \sigma,$ one sees from Theorem \ref{phi} that for any $\pi \in \Pi_{\temp}(M(F), \omega),$
\begin{equation} \label{trace with special xi}
  trace(\pi)(\phi_{\sigma, \xi}^{\omega})  = \left\{ 
  \begin{array}{l l}
    \xi(\sigma), & \: \text{if} \, \, \pi  \s  \sigma, \\

    \quad 0, & \: \text{otherwise.}\\
  \end{array} \right.
\end{equation}
Therefore, since $h^{\omega}_{\sigma}(1)=\xi(\sigma) d_M(\sigma),$ the lemma follows from  \eqref{two traces} and \eqref{trace values}.
\end{proof}
Continuing with the proof of Proposition \ref{pro for relation bw character and orb int}, for given $h^{\omega} \in C^{\infty}_{c}(M(F), \omega^{-1}),$ it is well-known that 
\[
trace(\pi)(h^{\omega})=0, ~ \forall \pi \in \Pi_{\temp}(M(F), \omega) 
~
\Longrightarrow
~ 
\mathit{O}^{M(F)}_{\gamma}(h^{\omega})=0,  ~ \forall \gamma \in M(F)^{\reg}
\]
(see \cite[A.2.b]{dkv}). 
From Lemma \ref{lemma} we have 
\[
\mathit{O}^{M(F)}_{\gamma}(\phi_{\sigma, \xi}^{\omega}) = 
\mathit{O}^{M(F)}_{\gamma}(\widetilde{h^{\omega}_{\sigma}})
\]
for all $\gamma \in M(F)^{\reg}.$ Hence, the proof of Proposition \ref{pro for relation bw character and orb int} is complete.
\end{proof}
\begin{remark} \label{rem for the value}
From \eqref{value at 1}, we note that the value $\xi(\sigma)$ in \eqref{equ for ch and or} must be determined by both $h^{\omega}_{\sigma}(1)$ and $d_M(\sigma).$
\end{remark}
We provide a relationship between a generalized pseudo-coefficient and a pseudo-coefficient of $\sigma.$
\begin{definition} (\cite[A.4]{dkv})
For $\delta \in \Pi_{\disc}(M(F),\omega),$ we say a function $\phi^{\omega}_{\delta} \in C^{\infty}_{c}(M(F), \omega^{-1})$ is \textit{a pseudo-coefficient} of $\delta$ if for $\pi \in \Pi_{\temp}(M(F), \omega),$
\[
  trace(\pi)(\phi^{\omega}_{\delta}) = \left\{ 
  \begin{array}{l l}
    1, & \: \text{if} \, \, \pi  \s  \delta, \\

    0, & \: \text{otherwise.}\\
  \end{array} \right.
\]
\end{definition} 

\begin{proposition} (\cite[Propositions 5(ii) and 6]{clo89})  \label{clozel}
For $\delta \in \Pi_{\disc}(M(F), \omega),$ there exists a pseudo-coefficient $\phi^{\omega}_{\delta} \in C^{\infty}_{c}(M(F), \omega^{-1})$ satisfying:
\[
\mathit{O}^{M(F)}_{\gamma}(\phi^{\omega}_{\delta}) = \overline{\Theta_{\delta}(\gamma)}, \quad \forall \gamma \in M(F)^{\el}.
\]
\end{proposition}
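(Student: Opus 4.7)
The plan is to derive this proposition as a special case of Proposition \ref{pro for relation bw character and orb int}, by selecting a regular function $\xi$ on $\Omega(\delta)$ that equals $1$ at $\delta$ and vanishes at every other discrete series in $\Omega(\delta)$ sharing the central character $\omega.$ This simultaneously yields a function in $C^{\infty}_{c}(M(F), \omega^{-1})$ that traces to $1$ on $\delta$ and to $0$ on all other tempered representations with central character $\omega,$ and that carries the prescribed orbital integrals on $M(F)^{\el}.$

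The first step is to construct such a $\xi.$ By Lemma \ref{some lemma for finiteness}, the set $\Omega(\delta) \cap \Pi_{\disc}(M(F), \omega)$ is finite; enumerate its elements as $\delta = \delta_1, \delta_2, \ldots, \delta_n$ with $\delta_i \simeq \delta \otimes \psi_i$ for representatives of distinct cosets in $\Psi_u(M) / {\Stab}_{\Psi}(\delta)$ and $\psi_1$ trivial. Under the isomorphism $\Psi(M) \simeq (\CC^{\times})^d$ of Remark \ref{epimorph}, standard Lagrange-style interpolation produces a Laurent polynomial $\xi_o$ on $(\CC^{\times})^d$ taking the value $1$ at $\lambda(\psi_1)$ and $0$ at $\lambda(\psi_i)$ for $i \geq 2.$ To ensure that $\xi_o$ descends to $\Omega(\delta) \simeq \Psi(M) / {\Stab}_{\Psi}(\delta),$ one may either raise each coordinate to the $m$-th power for $m = |{\Stab}_{\Psi}(\delta)|$ as in the example preceding Remark \ref{W-invariant}, or average $\xi_o$ over the finite group ${\Stab}_{\Psi}(\delta)$ and renormalize so that the value at $\psi_1$ remains $1.$ Either approach produces a ${\Stab}_{\Psi}(\delta)$-invariant regular function on $\Psi(M),$ and thus via Remark \ref{rem of reg ft on omega} a regular function $\xi$ on $\Omega(\delta)$ with the desired values.

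With this $\xi$ in hand, set $\phi^{\omega}_{\delta} := \phi^{\omega}_{\delta, \xi},$ the canonical projection defined in \eqref{projection}. A direct computation using the central character of $\pi$ shows that $trace(\pi)(\phi^{\omega}_{\delta}) = trace(\pi)(\phi_{\delta, \xi})$ for any $\pi \in \Pi_{\temp}(M(F), \omega),$ so Theorem \ref{phi} gives $trace(\pi)(\phi^{\omega}_{\delta}) = \xi(\pi)$ when $\pi \in \Omega(\delta)$ and $0$ otherwise. Since tempered representations in $\Omega(\delta)$ with central character $\omega$ belong to $\Omega(\delta) \cap \Pi_{\disc}(M(F), \omega),$ and since $\xi$ there is $1$ at $\delta$ and $0$ elsewhere, $\phi^{\omega}_{\delta}$ is a pseudo-coefficient of $\delta.$ The orbital integral identity then follows immediately from Proposition \ref{pro for relation bw character and orb int} applied with $\xi(\delta) = 1,$ yielding $\mathit{O}^{M(F)}_{\gamma}(\phi^{\omega}_{\delta}) = \overline{\Theta_{\delta}(\gamma)}$ for every $\gamma \in M(F)^{\el}.$

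The main obstacle is the interpolation step, specifically arranging that the constructed function be ${\Stab}_{\Psi}(\delta)$-invariant so that it descends to $\Omega(\delta).$ This is resolved by the finiteness of ${\Stab}_{\Psi}(\delta)$ combined with the ``raising to a power'' or averaging tricks already exploited elsewhere in the paper; together with the finiteness provided by Lemma \ref{some lemma for finiteness}, the construction is reduced to prescribing only finitely many values of a Laurent polynomial on a complex torus.
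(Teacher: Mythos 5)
Your route is genuinely different from the paper's. The paper proves Proposition \ref{clozel} essentially by citing Clozel: take a matrix coefficient $h^{\omega}_{\delta}$ normalized by $h^{\omega}_{\delta}(1) = d_M(\delta)$, use Clozel's Proposition 6 to produce a compactly supported function with the same orbital integrals, and read off the formula from Clozel's Proposition 5(ii). You instead assemble the result out of the paper's own Theorem \ref{phi}, Lemma \ref{some lemma for finiteness}, and Propositions \ref{pro for relation bw character and orb int} and \ref{pseudo}. This is a cleaner, more self-contained presentation (though the reliance on Clozel persists transitively, since the proof of Proposition \ref{pro for relation bw character and orb int} already invokes Clozel's Proposition 6 for $\widetilde{h^{\omega}_{\sigma}}$); the new content it adds is the explicit verification that a $\xi$ satisfying the hypothesis of Proposition \ref{pro for relation bw character and orb int} actually exists, which the paper leaves implicit.

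The gap is exactly in that interpolation step: neither of your two proposed constructions of a ${\Stab}_{\Psi}(\delta)$-invariant interpolant works as stated. Restricting to $\CC[z_1^{\pm m}, \ldots, z_d^{\pm m}]$ gives invariance, but such functions cannot separate points congruent modulo the full $m$-torsion subgroup $(\mu_m)^d \supseteq {\Stab}_{\Psi}(\delta)$; when the inclusion is strict (say $d = 2$, ${\Stab}_{\Psi}(\delta) = \{(1,1),(-1,-1)\}$, $\lambda(\psi_1) = (1,1)$, $\lambda(\psi_2) = (1,-1)$), two interpolation points collapse and the prescription $\xi_o(\psi_1) = 1$, $\xi_o(\psi_2) = 0$ becomes impossible in that subring. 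Averaging an arbitrary interpolant $\xi_o$ over ${\Stab}_{\Psi}(\delta)$ fails differently: the averaged value at $\psi_1$ is $\tfrac{1}{m}\sum_{\chi}\xi_o(\psi_1\chi)$, which involves the unconstrained numbers $\xi_o(\psi_1\chi)$ for $\chi \neq 1$ and could be $0$, while the averaged values at $\psi_i$ for $i \geq 2$ need not vanish -- and rescaling by a constant cannot repair the latter. The repair is to prescribe values on full orbits: the sets $\psi_i \cdot {\Stab}_{\Psi}(\delta)$, $i = 1, \ldots, n$, are finitely many pairwise-disjoint finite subsets of $(\CC^{\times})^d$, so interpolate $\xi_o \equiv 1$ on $\psi_1 \cdot {\Stab}_{\Psi}(\delta)$ and $\xi_o \equiv 0$ on $\psi_i \cdot {\Stab}_{\Psi}(\delta)$ for $i \geq 2$; averaging this $\xi_o$ over ${\Stab}_{\Psi}(\delta)$ leaves every prescribed value unchanged (each orbit is averaged over a constant), is automatically invariant, and requires no renormalization. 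With that fix, the remainder of your argument -- the identity $trace(\pi)(\phi^{\omega}_{\delta,\xi}) = trace(\pi)(\phi_{\delta,\xi})$ for $\pi \in \Pi_{\temp}(M(F),\omega)$, the identification of $\phi^{\omega}_{\delta,\xi}$ as a pseudo-coefficient of $\delta$, and the appeal to Proposition \ref{pro for relation bw character and orb int} with $\xi(\delta) = 1$ -- is correct.
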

\begin{proof}
By letting the matrix coefficient $\phi^{\omega}_{\delta}(1)= d_M(\delta)$ (the formal degree of $\delta$) in the proof of \cite[Proposition 6]{clo89}, we have a pseudo-coefficient $\phi^{\omega}_{\delta}$ whose orbital integral is the same as that of the matrix coefficient $h^{\omega}_{\delta}$ in the proof of Proposition \ref{pro for relation bw character and orb int}. 
Then, \cite[Proposition 5(ii)]{clo89} implies Proposition \ref{clozel}.
\end{proof}

\begin{proposition} \label{pseudo}
Let $\xi$ be as in Proposition \ref{pro for relation bw character and orb int}. Suppose $\xi(\sigma) = 1.$ Then, the projection $\phi_{\sigma, \xi}^{\omega} \in C^{\infty}_{c}(M(F), \omega^{-1})$ is a pseudo-coefficient of $\sigma.$
\end{proposition}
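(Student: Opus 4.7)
The plan is to simply unpack equation \eqref{trace with special xi}, which has already been established inside the proof of Lemma \ref{lemma}. That intermediate step was derived precisely under the hypothesis on $\xi$ now being assumed, and it reads: for every $\pi \in \Pi_{\temp}(M(F), \omega)$,
\[
trace(\pi)(\phi_{\sigma, \xi}^{\omega}) = \begin{cases} \xi(\sigma), & \pi \simeq \sigma, \\ 0, & \text{otherwise.} \end{cases}
\]
Substituting the additional hypothesis $\xi(\sigma) = 1$ turns the right-hand side into the characteristic function of $\{\sigma\}$ on $\Pi_{\temp}(M(F), \omega)$, which is exactly the defining trace identity for a pseudo-coefficient of $\sigma$. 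Since $\phi_{\sigma, \xi}^{\omega}$ was built in \eqref{projection} to lie in $C^{\infty}_{c}(M(F), \omega^{-1})$, every clause of the definition of a pseudo-coefficient would then be satisfied, and the proof would be complete.

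The only auxiliary point I would verify is that a regular function $\xi$ on $\Omega(\sigma)$ with both required properties simultaneously does exist, namely one that vanishes on $\Omega(\sigma) \cap \Pi_{\disc}(M(F), \omega)$ away from $\sigma$ \emph{and} takes the value $1$ at $\sigma$. This is elementary once one combines the finiteness supplied by Lemma \ref{some lemma for finiteness} with the description of regular functions on the quotient variety $\Omega(\sigma) \simeq \Psi(M) / {\Stab}_{\Psi}(\sigma)$: a Laurent polynomial on $(\CC^{\times})^{d}$, symmetrized over the finite group ${\Stab}_{\Psi}(\sigma)$ as in Remark \ref{W-invariant}, can be prescribed to vanish on a specified finite set and to take a prescribed nonzero value at one additional point. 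There is therefore no real obstacle: the essential work was done in establishing \eqref{trace with special xi} inside the proof of Lemma \ref{lemma}, and Proposition \ref{pseudo} amounts to recording the normalization $\xi(\sigma) = 1$ of that formula.
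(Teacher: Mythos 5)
Your proof is correct and follows exactly the paper's own argument, which simply cites \eqref{trace with special xi} with $\xi(\sigma)=1$. The extra paragraph confirming that a suitable $\xi$ exists is a reasonable sanity check but is not part of the paper's proof, which takes such $\xi$ as given by hypothesis.
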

\begin{proof}
This is a consequence of \eqref{trace with special xi} with $\xi(\sigma)=1.$
\end{proof}

\subsection{Lifting $\phi_{\sigma, \xi} \in C^{\infty}_{c}(M(F))$ to $f_{\sigma, \xi} \in C^{\infty}_{c}(G(F))$} \label{lifting}
We lift the generalized pseudo-coefficient $\phi_{\sigma, \xi} \in C^{\infty}_{c}(M(F))$ to $f_{\sigma, \xi} \in C^{\infty}_{c}(G(F))$ and discuss a relationship between $\phi_{\sigma, \xi}$ and $f_{\sigma, \xi}.$ 
Following the notation of \cite{bdk}, given a standard $F$-parabolic subgroup $P=MN$ of $G,$ 
the normalized (twisted by $\delta_{P}^{-1/2}$) Jacquet functor $\rr_{M,G} (\Pi)$ for $\Pi \in \Irr(G(F))$ defines the morphism $\rr_{M,G} : R(G) \rightarrow R(M).$
Let $R^*(G)$ (resp., $R^*(M)$) be the space of all linear functionals on $R(G)$ (resp., $R(M)$).
By $\ii^*_{G,M} : R^*(G) \rightarrow R^*(M)$ and $\rr^*_{M,G} : R^*(M) \rightarrow R^*(G)$ we denote morphisms adjoint to $\ii_{G,M}$ and $\rr_{M,G},$ respectively. 
Namely, $\ii^*_{G,M}(\mathsf{f}) = \mathsf{f}(\ii_{G,M}(\pi))$ for $\mathsf{f} \in R^*(G)$ and $\pi \in R(M),$ and $\rr^*_{M,G}(\mathsf{h}) = \mathsf{h}(\rr_{M,G}(\Pi))$ for $\mathsf{h} \in R^*(M)$ and $\Pi \in R(G).$
We set $\mathcal{F}_{tr}(G) = \lbrace trace(\Pi)(f) \; | \;  \Pi \in R(G), \; f \in C^{\infty}_{c}(G(F))\rbrace$ and $\mathcal{F}_{tr}(M) = \lbrace trace(\pi)(h) \; | \;  \pi \in R(M), \; h \in C^{\infty}_{c}(M(F))\rbrace.$
\begin{proposition} \label{existence and uniqueness of f}
For any generalized pseudo-coefficient $\phi_{\sigma, \xi} \in C^{\infty}_{c}(M(F)),$ there exists $f_{\sigma, \xi} \in C^{\infty}_{c}(G(F))$ such that
\begin{equation} \label{f_phi}
trace(\Pi)(f_{\sigma, \xi}) = trace (\rr_{M,G} ( \Pi ))(\phi_{\sigma, \xi}), 
\quad \quad  \forall \Pi \in R(G).  
\end{equation} 
Moreover, the function $f_{\sigma, \xi} \in C^{\infty}_{c}(G(F))$ is uniquely determined modulo $J_G$ (cf. Remark \ref{uniquely determined}).
\end{proposition}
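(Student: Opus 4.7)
The plan is to define a linear functional $\lambda : R(G) \to \CC$ by
\[
\lambda(\Pi) := trace(\rr_{M,G}(\Pi))(\phi_{\sigma,\xi}), \quad \Pi \in R(G),
\]
verify that $\lambda$ is good in the sense of Definition \ref{good}, and then invoke the trace Paley--Wiener theorem (Proposition \ref{pw}) to produce $f_{\sigma,\xi} \in C^{\infty}_{c}(G(F))$ with $trace(\Pi)(f_{\sigma,\xi}) = \lambda(\Pi)$; this is exactly identity \eqref{f_phi}. The uniqueness modulo $J_G$ then follows verbatim from \cite[Theorem 0]{kaz} as in Remark \ref{uniquely determined}, since two functions in $C^{\infty}_{c}(G(F))$ that agree in trace on all of $\Pi_{\temp}(G(F))$ differ by an element of $J_G$.

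Let $\mathsf{h} : R(M) \to \CC$ denote $\mathsf{h}(\pi) := trace(\pi)(\phi_{\sigma,\xi})$. By Proposition \ref{pw} (applied to $M$) the functional $\mathsf{h}$ is good, and from the construction in the proof of Theorem \ref{phi} it is supported on the single connected component $\Theta_\sigma \subseteq \Theta(M)$ containing $\ic(\sigma)$. For condition (i) on $\lambda$: if $\lambda(\Pi) \neq 0$, then some constituent of $\rr_{M,G}(\Pi)$ lies in $\ic^{-1}(\Theta_\sigma)$, and since Jacquet restriction preserves cuspidal support up to $G(F)$-conjugacy (Bernstein decomposition), $\ic(\Pi)$ is forced into the unique connected component of $\Theta(G)$ that contains an $M(F)$-conjugate of $\ic(\sigma)$. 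Hence $\lambda$ is supported on a single connected component of $\Irr(G(F))$. For condition (ii), fix a standard $F$-Levi subgroup $L \subseteq G$ and $\pi \in R(L)$. The Bernstein--Zelevinsky geometric lemma expresses $\rr_{M,G} \circ \ii_{G,L}$ at the Grothendieck level as a finite sum of functors of the shape $\ii_{M, M_w} \circ \tilde{w} \circ \rr_{L_w, L}$, where $w$ runs over a finite set of Weyl representatives and $L_w \subseteq L$, $M_w \subseteq M$ are the induced standard $F$-Levi subgroups with $w L_w w^{-1} = M_w$. Applying $\mathsf{h}$ termwise,
\[
\lambda(\ii_{G,L}(\pi \otimes \psi)) = \sum_{w} \mathsf{h}\bigl(\ii_{M, M_w}(\tilde{w}(\rr_{L_w, L}(\pi \otimes \psi)))\bigr),
\]
and the parameter fed into $\mathsf{h} \circ \ii_{M, M_w}$ depends on $\psi$ through the algebraic map of tori $\Psi(L) \to \Psi(L_w) \xrightarrow{\tilde{w}} \Psi(M_w)$. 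Goodness of $\mathsf{h}$ (condition (ii) applied to the $F$-Levi $M_w$ of $M$) then makes each summand a regular function of $\psi \in \Psi(L)$, and a finite sum of regular functions is regular.

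The main obstacle is this last step: one has to unwind the geometric lemma carefully enough to see that the Weyl translates and intermediate Jacquet restrictions pass unramified twists on $L$ to algebraic characters on $M_w$, so that goodness of $\mathsf{h}$ transfers to $\lambda$. Once this bookkeeping is settled, $\lambda$ is good, Proposition \ref{pw} yields $f_{\sigma,\xi} \in C^{\infty}_{c}(G(F))$ satisfying \eqref{f_phi}, and the uniqueness modulo $J_G$ is immediate from \cite[Theorem 0]{kaz}.
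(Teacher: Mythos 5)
Your proof is correct, but it takes a different route from the paper. The paper's own proof of this proposition is a one-line citation: it invokes \cite[Proposition 3.2(ii)]{bdk}, which states precisely that $\rr^*_{M,G}(\mathcal{F}_{tr}(M)) \subset \mathcal{F}_{tr}(G)$, and this applied to the trace functional $\mathsf{h} = trace(\cdot)(\phi_{\sigma,\xi}) \in \mathcal{F}_{tr}(M)$ gives the existence of $f_{\sigma,\xi}$ satisfying \eqref{f_phi}; the uniqueness is cited from \cite[Remark 1.2]{bdk} and \cite[Theorem 0]{kaz}. What you have done instead is reconstruct the proof of \cite[Proposition 3.2(ii)]{bdk} from scratch: you define $\lambda = \rr^*_{M,G}(\mathsf{h})$, check directly that it is \emph{good} in the sense of Definition \ref{good}, and then apply the trace Paley--Wiener theorem. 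Your verification of conditions (i) and (ii) is sound -- in particular your use of the Bernstein--Zelevinsky geometric lemma for $\rr_{M,G}\circ\ii_{G,L}$, and the observation that $\rr_{L_w,L}(\pi\otimes\psi) = \rr_{L_w,L}(\pi)\otimes \psi|_{L_w(F)}$ (using that $\psi$ is trivial on $U_w(F)\subset L(F)^1$, Remark \ref{properties of M/M1}(ii)) so that the twist passes through the algebraic map $\Psi(L)\to\Psi(L_w)\xrightarrow{\tilde w}\Psi(M_w)$ -- and this is essentially how BDK establish their Proposition 3.2(ii) in the first place. What the paper's approach buys is brevity by appeal to a standard reference; what yours buys is a self-contained account that makes transparent why the lifted functional is good, at the cost of re-deriving a cited result. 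Both are acceptable.
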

\begin{proof}
The first part is a consequence of the fact (\cite[Proposition 3.2(ii)]{bdk}) that
\[
\rr^*_{M,G} (\mathcal{F}_{tr}(M)) \subset \mathcal{F}_{tr}(G). 
\] 
The uniqueness is due to the following short exact sequence (\cite[Remark 1.2]{bdk} and \cite[Theorem 0]{kaz})
\[
0 \longrightarrow J_G \longrightarrow C^{\infty}_{c}(G(F)) \xrightarrow{trace} \mathcal{F}_{tr}(G) \longrightarrow 0,
\]
where $trace$ maps $f \mapsto (\Pi \mapsto trace(\Pi)(f))$ from $C^{\infty}_{c}(G(F))$ onto $\mathcal{F}_{tr}(G).$
\end{proof}
 
For any standard $F$-Levi subgroups $M$ and $L,$ we set 
\[
W(L, M) :=\lbrace w \in W_G : wLw^{-1} = M \rbrace.
\]
We say that $L$ and $M$ are \textit{associate} if $W(L, M)$ is non-empty. When $L$ and $M$ are associate, it turns out that $W_M \cdot W(L,M) \cdot W_L = W(L, M)$ since the element $w_M w w_L$ sends $L$ to $M$ for $w_M \in W_M,$ $w \in W(L,M)$ and $w_L \in W_L.$ Furthermore, given any standard $F$-Levi subgroups $L$ and $M$ corresponding to $\vartheta$ and $\theta,$ respectively, we note from \cite[pp.448-449]{bz77} that 
\begin{equation} \label{an equal}
W_M  \backslash W(L, M) / W_L = W(L, M) / W_L \s W(\vartheta, \theta),
\end{equation}
where $W(\vartheta, \theta):= \lbrace w \in W_G : w(\vartheta) = \theta \rbrace.$ 
For any $\pi \in \Pi_{\temp}(L(F))$ and $w \in W(L, M),$ we denote by $^w{\pi}$ the representation of $M(F)$ such that $^w{\pi}(m) :=\pi(w^{-1}mw)$ for $m \in M(F).$ 
It is clear that $^w{\pi} \in  \Pi_{\temp}(M(F)).$
\begin{proposition} \label{trace of f_phi and phi}
For any standard $F$-Levi subgroup $L$ of $G$ and $\pi \in \Pi_{\temp}(L(F)),$
\[
trace(\ii_{G,L} \pi )(f_{\sigma, \xi})
= \left\{ 
  \begin{array}{l l}
     \sum_{w \in W(\vartheta, \theta)} trace( ^w{\pi})(\phi_{\sigma, \xi}),
        & \quad \text{if} \; \;  L \; \text{and} \; M \; \text{are associate}, \\
        
     \quad \; \; 0,  
        & \quad \text{otherwise.}
  \end{array} \right.
\]
\end{proposition}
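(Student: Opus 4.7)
The plan is to reduce the $G(F)$-trace to an $M(F)$-trace via the defining property \eqref{f_phi} of Proposition \ref{existence and uniqueness of f}, obtaining
\[
trace(\ii_{G, L}\pi)(f_{\sigma, \xi}) \;=\; trace\!\left(\rr_{M, G}(\ii_{G, L}\pi)\right)(\phi_{\sigma, \xi}),
\]
and then to expand the right-hand side using the Bernstein--Zelevinsky geometric lemma, which in $R(M)$ gives
\[
\rr_{M, G}(\ii_{G, L}\pi) \;=\; \sum_{w \in W_M \backslash W_G / W_L} \ii_{M, M_w}\!\left({}^w \rr_{L_w, L}(\pi)\right),
\]
with $M_w = M \cap wLw^{-1}$ and $L_w = L \cap w^{-1}Mw$. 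I would then apply the linear functional $\lambda := trace(\cdot)(\phi_{\sigma, \xi})$ summand by summand and argue that only the cosets with $wLw^{-1} = M$ survive.

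The key input, inherited from the construction of $\phi_{\sigma, \xi}$ in the proof of Theorem \ref{phi}, is that $\lambda$ vanishes on any standard module of $R(M)$ outside $\Omega(\sigma)$. I split the double cosets into three cases. If $M_w \subsetneq M$, the summand is parabolically induced from a proper Levi of $M$; expanding in the standard-module basis (Proposition \ref{standard}) yields standard modules all induced from proper Levis, hence killed by $\lambda$ since $\Omega(\sigma)$ consists of essentially square-integrable representations of $M$. If $M_w = M$ and $L_w = L$, the inclusions $M \subseteq wLw^{-1}$ and $L \subseteq w^{-1}Mw$ together force $wLw^{-1} = M$, i.e.\ $w \in W(L, M)$, and the summand collapses to ${}^w\pi$. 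If $M_w = M$ but $L_w \subsetneq L$, then $\rr_{L_w, L}(\pi)$ is a Jacquet restriction of a tempered representation to a proper sub-Levi of $L$, and by Casselman's temperedness criterion its constituents have exponents strictly in the positive Weyl chamber; after the twist ${}^w$ and re-expansion in the standard-module basis of $R(M)$, no resulting standard module lies in $\Omega(\sigma)$, so $\lambda$ kills this term as well.

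Combining the three cases, the surviving cosets lie in $W_M \backslash W(L, M) / W_L$. If $L$ and $M$ are not associate this set is empty and the trace is $0$, giving the second branch. If they are associate, the identification \eqref{an equal} yields $W_M \backslash W(L, M)/W_L = W(L, M)/W_L \simeq W(\vartheta, \theta)$; since right $W_L$-translation of $w$ preserves $\pi$ as a representation of $L$, and left $W_M$-translation preserves isomorphism classes of representations on $M$, the assignment $w \mapsto trace({}^w\pi)(\phi_{\sigma, \xi})$ descends to this quotient, producing the first branch.

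The main obstacle will be the third case: rigorously justifying that Jacquet restrictions of a tempered $\pi$ to a proper sub-Levi of $L$, after Weyl-twist into $M$, cannot contribute any standard-module component of $\Omega(\sigma)$. This is the only step genuinely using temperedness rather than treating arbitrary $\pi \in R(L)$, and it relies on combining Casselman's exponent criterion with the Langlands classification of standard modules to preclude any essentially square-integrable constituent from appearing.
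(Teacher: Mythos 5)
Your case analysis is more careful than the paper's: you correctly isolate a third case ($M_w = M$ with $L_w \subsetneq L$) that the paper's proof skips entirely. The paper simply asserts that $w \notin W(L,M)$ forces $M_w \subsetneq M$, and this is false whenever $\dim L > \dim M$: one can then have $M \subsetneq wLw^{-1}$, hence $M_w = M$, while $L_w = w^{-1}Mw$ is a proper Levi of $L$. So the case you flagged as ``the main obstacle'' is a genuine one, and not one the published proof addresses.

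Unfortunately your proposed treatment of that case does not hold up. First, Casselman's temperedness criterion places the exponents of $\rr_{L_w, L}(\pi)$ only in the \emph{closure} of the relevant cone (strictness holds only for essentially square-integrable $\pi$), and the cone is on the opposite side from what you wrote. Second, and more fundamentally, the exponent bound does not give the needed vanishing: a constituent of $\rr_{L_w, L}(\pi)$, transported to $M$ by $w$, is simply some irreducible representation of $M(F)$, and nothing in Casselman's criterion prevents it from being essentially square-integrable. Since $\Omega(\sigma) = \{\sigma\otimes\psi : \psi\in\Psi(M)\}$ imposes \emph{no} restriction on the real part of the $A_M$-exponent, such a constituent can perfectly well lie in $\Omega(\sigma)$, and then $\lambda$ does not kill it. Concretely, take $G=GL_2$, $M=T$ the diagonal torus, $\sigma=\mathbbm{1}$, $L=G$, $\pi=\St$: here $\rr_{T,G}(\St)$ is a single unramified character of $T(F)$, which lies in $\Omega(\mathbbm{1}) = \Psi(T)$, so $trace(\St)(f_{\mathbbm{1},\xi}) = \xi\bigl(\rr_{T,G}(\St)\bigr)$ is generically nonzero, whereas the Proposition's second branch would give $0$ (since $G$ and $T$ are not associate). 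This shows the third case cannot be dispatched by the Casselman/Langlands route you sketch; the difficulty is already present in the paper's own argument.
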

\begin{proof}
We first recall from \cite[Lemma 2.11(a) and 2.12]{bz77} and \cite[Lemma 5.4(ii)]{bdk} that
\[
\rr_{M,G} \circ \ii_{G,L} (\pi) = \dsum_{w \in W_M \backslash W_G / W_L} \ii_{M,M_w} \circ w \circ \rr_{L_w, L} (\pi),
\]
where $M_w = M \cap wLw^{-1}$ and $L_w=w^{-1}Mw \cap L.$ Then \eqref{f_phi} says that for any $\pi \in \Irr(L(F)),$
\begin{align*}
trace(\ii_{G,L} \pi )(f_{\sigma, \xi}) 
&= trace(\rr_{M,G} \circ \ii_{G,L} (\pi))(\phi_{\sigma, \xi})& \\
&= \dsum_{w \in W_M \backslash W_G / W_L} trace(\ii_{M,M_w} \circ w \circ \rr_{L_w, L} (\pi) )(\phi_{\sigma, \xi}).&
\end{align*}
By \eqref{trace of phi vanishing}, it turns out that the sum runs over $w \in W(L, M) / W_L$ only when $L$ and $M$ are associate. Otherwise, the group $M_w = M \cap wLw^{-1}$ has to be proper, so the sum vanishes by the cuspidal property of $\phi_{\sigma, \xi}$ (see Proposition \ref{cuspidal property}). When $L$ and $M$ are associate, we note that $\ii_{M,M_w} \circ w \circ \rr_{L_w, L} (\pi) = \, ^w{\pi}$ and $W(L, M) / W_L \s W(\vartheta, \theta).$ Thus the proof is complete.
\end{proof}

Now we consider the case that $L = M,$ that is, $\vartheta = \theta.$ Write $W(\theta, \theta):=\lbrace w \in W_G : w(\theta) = \theta \rbrace.$ We say that $M$ is \textit{self-associate} if $W(\theta, \theta) \supsetneqq \lbrace 1 \rbrace.$ Otherwise, that is, if $W(\theta, \theta) = \lbrace 1 \rbrace,$ $M$ is called \textit{non self-associate}. We note that
\begin{equation} \label{equality for pro below}
trace(\ii_{G,M} \pi )(f_{\sigma, \xi})=\sum_{w \in W(\theta, \theta)} \, ^w\xi(\pi).
\end{equation}
Here the action $^w\xi(\pi)$ of $W_M$ on a regular function $\xi$ is naturally defined  as $\xi(\, ^{w}\pi)$ (cf. Remark \ref{rem of poly}).
Then we have the following theorem which is a consequence of Proposition \ref{trace of f_phi and phi}.
\begin{theorem} \label{pro for phi and f}
Let $M$ be an $F$-Levi subgroup of $G$ corresponding to $\theta \subset \Delta,$ and let $\xi$ be a $W(\theta, \theta)$-invariant regular function on $\Omega(\sigma)$ (see Remarks \ref{rem of reg ft on omega} and \ref{W-invariant}). For every $\gamma \in M(F)^{\reg},$ we have
\[
|D_{G/M}(\gamma)| ^{1/2} \mathit{O}^{G(F)}_{\gamma}(f_{\sigma, \xi}) 
= \mathit{O}^{M(F)}_{\gamma}(\overline{f}_{\sigma, \xi}^{(P)})
= \mathit{O}^{M(F)}_{\gamma}(|W(\theta, \theta)| \cdot \phi_{\sigma, \xi}),
\]
where $D_{G/M}(\gamma) = \det(1-ad(\gamma)) |_{\Lie(G)/\Lie(M)}.$
\end{theorem}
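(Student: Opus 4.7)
The plan is to establish the two equalities in turn. The first one,
\[
|D_{G/M}(\gamma)|^{1/2} \mathit{O}^{G(F)}_{\gamma}(f_{\sigma, \xi}) = \mathit{O}^{M(F)}_{\gamma}(\overline{f}_{\sigma, \xi}^{(P)}),
\]
is just Harish-Chandra's parabolic descent for orbital integrals, applied to $f_{\sigma, \xi} \in C^{\infty}_{c}(G(F))$ and its (normalized) constant term $\overline{f}_{\sigma, \xi}^{(P)} \in C^{\infty}_{c}(M(F))$ along $P = MN$. This is precisely the identity already cited in the proof of Proposition~\ref{cuspidal property} from \cite[Lemma 9]{dij72}, valid for any $\gamma \in M(F) \cap G(F)^{\reg}$, so it requires no further argument.

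For the second equality,
\[
\mathit{O}^{M(F)}_{\gamma}(\overline{f}_{\sigma, \xi}^{(P)}) = |W(\theta, \theta)| \cdot \mathit{O}^{M(F)}_{\gamma}(\phi_{\sigma, \xi}) \qquad (\gamma \in M(F)^{\reg}),
\]
I would reduce to a comparison of tempered traces using \cite[Theorem 0]{kaz}, exactly as the paper does elsewhere: two elements of $C^{\infty}_{c}(M(F))$ agree on every orbital integral over $M(F)^{\reg}$ if and only if they agree on $trace(\pi)$ for every $\pi \in \Pi_{\temp}(M(F))$. Thus the target reduces to
\[
trace(\pi)\bigl(\overline{f}_{\sigma, \xi}^{(P)}\bigr) = |W(\theta, \theta)| \cdot trace(\pi)(\phi_{\sigma, \xi}), \qquad \forall\, \pi \in \Pi_{\temp}(M(F)).
\]

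To handle this trace comparison, I would first invoke the standard parabolic-descent identity for traces (the same one already used in the proof of Proposition~\ref{cuspidal property}) to rewrite the left-hand side as $trace(\ii_{G,M}(\pi))(f_{\sigma, \xi})$. Then Proposition~\ref{trace of f_phi and phi} with $L = M$ (so that $\vartheta = \theta$ and $M$ is trivially associate to itself) expresses this as
\[
\sum_{w \in W(\theta, \theta)} trace(\, ^{w}\pi)(\phi_{\sigma, \xi}).
\]
Each term is evaluated by Theorem~\ref{phi}: it equals $\xi(\, ^{w}\pi)$ when $^{w}\pi \in \Omega(\sigma)$ and $0$ otherwise. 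Invoking the $W(\theta, \theta)$-invariance of $\xi$, the sum collapses to $|W(\theta, \theta)| \cdot \xi(\pi)$ when $\pi \in \Omega(\sigma)$ and to $0$ otherwise, which matches $|W(\theta, \theta)| \cdot trace(\pi)(\phi_{\sigma, \xi})$ again by Theorem~\ref{phi}.

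The main technical point that requires care is the $W(\theta, \theta)$-bookkeeping: in order for the invariance hypothesis $^{w}\xi = \xi$ to be meaningful on $\Omega(\sigma)$ (cf.\ Remark~\ref{W-invariant}), the $W(\theta, \theta)$-action on $\Irr(M(F))$ must preserve $\Omega(\sigma)$. This is what guarantees, in the tempered trace comparison above, that $\pi \notin \Omega(\sigma)$ forces $^{w}\pi \notin \Omega(\sigma)$ for every $w \in W(\theta, \theta)$, so that both sides vanish off the orbit. Once this is pinned down, the proof is purely an assembly of Proposition~\ref{trace of f_phi and phi}, Theorem~\ref{phi}, and \cite[Theorem 0]{kaz}, with no new analytic ingredient required.
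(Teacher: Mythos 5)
Your proposal follows essentially the same route as the paper: the first equality is van Dijk's parabolic descent for orbital integrals (the identity already cited as \eqref{orbital int for par descent}), and the second equality is reduced via Kazhdan's density theorem \cite[Theorem 0]{kaz} to a tempered trace comparison, which is then computed by the chain $trace(\pi)(\overline{f}_{\sigma,\xi}^{(P)}) = trace(\ii_{G,M}\pi)(f_{\sigma,\xi}) = \sum_{w\in W(\theta,\theta)} trace({}^{w}\pi)(\phi_{\sigma,\xi})$ using Proposition~\ref{trace of f_phi and phi} with $L=M$, and finally collapsed by the $W(\theta,\theta)$-invariance of $\xi$ and Theorem~\ref{phi}. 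The only cosmetic difference is that the paper splits into the cases $|W(\theta,\theta)|=1$ (non self-associate) and $|W(\theta,\theta)|\geq 2$ (self-associate), whereas you treat them uniformly — which is fine, since the sum over $W(\theta,\theta)$ degenerates trivially when it is a singleton. Your closing remark about the $W(\theta,\theta)$-action needing to preserve $\Omega(\sigma)$ for the invariance hypothesis (and for the sum to collapse to $|W(\theta,\theta)|\cdot\xi(\pi)$) is a genuine subtlety that the paper passes over silently in Remark~\ref{remark for phi and f}; you are right that it is baked into the hypothesis but worth making explicit.
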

\begin{proof} 
The first equality holds in general due to  \eqref{orbital int for par descent} in Section \ref{properties}. We shall check the second equality. 

When $M$ is not self-associate, $|W(\theta, \theta)| = 1.$ 
In this case, we do not need the condition of the $W(\theta, \theta)$-invariance. Thus the second equality is a consequence of \cite[Theorem 0]{kaz} and the fact that for any $\pi \in \Irr(M(F)),$
\begin{align*}
trace(\pi)(\bar{f}_{\sigma, \xi}^{(P)})
& = trace(\ii_{G,M} \pi )(f_{\sigma, \xi}) \\
& \overset{\eqref{equality for pro below}}{=} trace(\pi)( \phi_{\sigma, \xi}).
\end{align*}  

When $M$ is self-associate, we note that $|W(\theta, \theta)| \geq 2.$
Since $\xi$ is a $W(\theta, \theta)$-invariant by hypothesis, \eqref{equality for pro below} implies that
\[
trace(\pi)(\bar{f}_{\sigma, \xi}^{(P)}) = |W(\theta, \theta)| \cdot trace(\pi)(\phi_{\sigma, \xi})
\] 
for any $\pi \in \Irr(M(F)).$
Therefore, we have from \cite[Theorem 0]{kaz}
\[
\mathit{O}^{M(F)}_{\gamma}(\overline{f}_{\sigma, \xi}^{(P)})
= \mathit{O}^{M(F)}_{\gamma}(|W(\theta, \theta)| \cdot \phi_{\sigma, \xi}).
\] 
This completes the proof.
\end{proof}
\begin{remark} \label{remark for phi and f}
In the case that $|W(\theta, \theta)| \geq 2,$ given any regular function $\xi_o$ on $\Omega(\sigma)$ and any $w \in W(\theta, \theta),$ we set 
\[
^w \xi_o (z) = ^w \xi_o(\sigma \otimes \psi) := \xi_o(\, ^{w} \sigma \otimes \, ^{w} \psi).
\]
Then, $^w \xi_o (z)$ is clearly a regular function on $\Omega(\sigma).$ 
It turns out that
\[
\xi(z) := \dfrac{1}{|W(\theta, \theta)|}\dsum_{w \in W(\theta, \theta)} \, ^{w}\xi_o(z)
\]
must be a $W(\theta, \theta)$-invariant regular function on $\Omega(\sigma).$
It thus follows that there is always a $W(\theta, \theta)$-invariant regular function on $\Psi(M)$ (cf. Remark \ref{W-invariant}).
\end{remark}

\subsection{Application to the Plancherel formula} \label{f(1) and PM}
Given a discrete series representation $\sigma \in \Pi_{\disc}(M(F))$ and a regular function $\xi$ on $\Omega(\sigma),$
by means of the Plancherel formula, we see a relationship between the value $\phi_{\sigma, \xi}(1)$ and the formal degree $d_M(\sigma).$ 
We also express the value $f_{\sigma, \xi}(1)$ in terms of $\xi,$ the formal degree of $\sigma,$ and the Plancherel measure on $\Omega(\sigma).$

\begin{proposition}[Relationship between $\phi_{\sigma, \xi}(1)$ and $d_M(\sigma)$] \label{formula of phi(1)}
We have
\begin{equation} \label{phi in PF}
\phi_{\sigma, \xi}(1) = d_M(\sigma)  \int_{\psi \in \Psi_u(M) / {\Stab}_{\Psi}(\sigma)}  \xi(\sigma \otimes \psi) d \psi.
\end{equation}
\end{proposition}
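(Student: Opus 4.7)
The plan is to apply Harish-Chandra's Plancherel formula (Theorem~\ref{pf}), with $G$ replaced by $M,$ to the function $\phi_{\sigma, \xi} \in C^{\infty}_{c}(M(F))$ provided by Theorem~\ref{phi}. This yields
\[
\phi_{\sigma, \xi}(1) = \dsum_{L} a(M|L) \int \limits_{\pi \in \Pi_{\disc}(L(F))} d_L(\pi)\, \mu_L(\pi)\, trace(\ii_{M,L} \pi)(\phi_{\sigma, \xi})\, d \pi,
\]
where $L$ runs over $F$-Levi subgroups of $M$ up to $M(F)$-conjugacy.

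The first key step is to show that only the term $L = M$ survives. For every proper $F$-Levi subgroup $L \subsetneq M$ and every $\pi \in \Pi_{\disc}(L(F)),$ the induced representation $\ii_{M,L}(\pi)$ lies outside $\Omega(\sigma)$ because $\Omega(\sigma)$ consists of essentially square-integrable representations which, by Proposition~\ref{standard} and the reasoning in the proof of Theorem~\ref{phi}, cannot be realized from a proper Levi. Hence $trace(\ii_{M,L}\pi)(\phi_{\sigma, \xi}) = 0,$ exactly as in \eqref{trace of phi vanishing}. This collapses the sum to the contribution $L = M.$

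For $L = M,$ I compute the remaining factors intrinsically on $M.$ Since $A_M$ is central in $M,$ the Weyl group $W(M, A_M)$ is trivial, and the integrals defining $\gamma(M|M)$ and $c(M|M)$ are over the trivial group, so $a(M|M) = 1.$ Likewise, Definition~\ref{df PM} with $w = 1$ and $\nu = 0$ gives $\mu_M(\pi) = 1.$ Thus
\[
\phi_{\sigma, \xi}(1) = \int \limits_{\pi \in \Pi_{\disc}(M(F))} d_M(\pi)\, trace(\pi)(\phi_{\sigma, \xi})\, d \pi.
\]
By Theorem~\ref{phi}, the integrand is supported on $\Omega(\sigma) \cap \Pi_{\temp}(M(F))$ and equals $\xi(\pi)$ there; moreover the unitarity of $\pi$ forces the twist $\psi$ with $\pi \s \sigma \otimes \psi$ to lie in $\Psi_u(M).$

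To finish, I transport the integral against $d\pi$ to one against $d\psi$ using the natural isomorphism $\Psi_u(M) / {\Stab}_{\Psi}(\sigma) \s \{\sigma \otimes \psi : \psi \in \Psi_u(M)\}$ recalled in Section~\ref{def of PM}, and use the fact that the formal degree is invariant under unitary twists, so $d_M(\sigma \otimes \psi) = d_M(\sigma)$ for every $\psi \in \Psi_u(M).$ This gives the desired identity
\[
\phi_{\sigma, \xi}(1) = d_M(\sigma) \int_{\psi \in \Psi_u(M) / {\Stab}_{\Psi}(\sigma)} \xi(\sigma \otimes \psi)\, d\psi.
\]
The main subtlety I expect is bookkeeping: correctly applying Theorem~\ref{pf} \emph{inside} $M$ (so that $W_M$ is replaced by the trivial group $W(M, A_M)$ and the Plancherel measure trivializes), and justifying the invariance of the formal degree under unitary unramified twists so that $d_M(\pi)$ pulls out of the integral as $d_M(\sigma).$
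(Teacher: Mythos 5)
Your proof is correct and follows essentially the same route as the paper's: apply Harish-Chandra's Plancherel formula with $M$ in the role of $G$, observe that only the term $L=M$ survives because of the cuspidality/vanishing of traces of properly induced representations (as in \eqref{trace of phi vanishing}), trivialize $a(M|M)$ and $\mu_M(\pi)$, and then transport the integral to $\Psi_u(M)/\Stab_\Psi(\sigma)$ using the unitary-twist invariance of the formal degree. You are somewhat more explicit than the paper about why the proper-Levi terms drop out, but the argument is the same.
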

\begin{proof}
We apply the generalized pseudo-coefficient $\phi_{\sigma, \xi}$ to the Plancherel formula (Theorem \ref{pf}). 
Since $G=M,$ we have $a(G|M)=1$ and $\mu_M (\pi)=1$ for any $\pi \in \Pi_{\disc}(M(F)).$ 
By the construction of $\phi_{\sigma, \xi}$ in Theorem \ref{phi}, we have
\[
trace(\pi)(\phi_{\sigma, \xi}) = 0
\]
unless $\pi \in \Omega(\sigma),$ in which case $trace(\pi)(\phi_{\sigma, \xi}) = \xi(\pi).$
Note that $\pi \in \Omega(\sigma)$ is of the form $\sigma \otimes \psi$ with $\psi \in \Psi_u(M),$ and $d_M(\sigma \otimes \psi)=d_M(\sigma)$ for any unitary character $\psi$ of $M(F).$
From the Plancherel formula (Theorem \ref{pf}), we then have
\[
\phi_{\sigma, \xi}(1)  =  d_M (\sigma) \int_{\pi \in \Pi_{\disc}(M(F)) \cap \Omega(\sigma)}  \xi(\pi) d \pi.
\]
Since $\Pi_{\disc}(M(F)) \cap \Omega(\sigma) \s  \Psi_u(M) / {\Stab}_{\Psi}(\sigma),$ we have \eqref{phi in PF}.
\end{proof}
\begin{remark}
If $\xi=1$ so that $\phi^\omega_{\sigma, \xi}$ becomes a pseudo-coefficient (Proposition \ref{pseudo}), the right-hand side of \eqref{phi in PF} becomes $d_M(\sigma)  \int_{\psi \in \Psi_u(M) / {\Stab}_{\Psi}(\sigma)} d \psi.$
By choosing a suitable measure $d \psi$ on $\Psi_u(M)$ so that $\int_{\psi \in \Psi_u(M) / {\Stab}_{\Psi}(\sigma)} d \psi=1$ (see Section \ref{def of PM}) ,
we deduce 
\begin{equation} \label{phi and formal degree}
\phi_{\sigma, \xi}(1)=d_M(\sigma).
\end{equation}
If the center $Z_M$ is anisotropic over $F,$ so that $\Psi(M)$ is trivial and the regular function $\xi$ on $\Omega(\sigma)$ equals just 1,
then \eqref{phi and formal degree} immediately follows. 
This is an analogue of the well-known property of a pseudo-coefficient $\phi^\omega_{\sigma, \xi}(1)=d_M(\sigma)$ 
(for example, see \cite[A.4]{dkv}).
\end{remark}
Recall the notation $\mu_{M}(\sigma \otimes \psi)$ for the Plancherel measure from Remark \ref{pm at L_F}. We then have the following proposition.
\begin{proposition}[Relationship between $f_{\sigma, \xi}(1)$ and $\mu_{M} (\sigma \otimes \psi)$] \label{formula of f(1)}
Let $M$ be an $F$-Levi subgroup of $G$ corresponding to $\theta \subset \Delta,$  and let $\xi$ be a $W(\theta, \theta)$-invariant regular function on $\Omega(\sigma).$ Then we have
\[
f_{\sigma, \xi}(1) = a(G|M) d_M(\sigma) |W(\theta, \theta)| \int_{\psi \in \Psi_u(M) / {\Stab}_{\Psi}(\sigma)} 
\mu_{M} (\sigma \otimes \psi) \xi(\sigma \otimes \psi) d \psi.
\]
\end{proposition}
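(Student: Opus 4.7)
The plan is to apply the Plancherel formula of Theorem~\ref{pf} directly to the lifted function $f_{\sigma,\xi}\in C^{\infty}_{c}(G(F))$ and then exploit the trace formula of Proposition~\ref{trace of f_phi and phi} together with the defining property of $\phi_{\sigma,\xi}$ from Theorem~\ref{phi}.

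First I would observe that in the sum $\sum_{M'}$ appearing in Theorem~\ref{pf}, only the conjugacy class of $M$ contributes: by Proposition~\ref{trace of f_phi and phi}, $trace(\ii_{G,L}\pi)(f_{\sigma,\xi})=0$ whenever $L$ is not associate to $M$, and since associate Levis are $G(F)$-conjugate (via representatives of $W(\vartheta,\theta)\subset W_G$), we may take $M'=M$ as our representative. This reduces the Plancherel sum to
\[
f_{\sigma,\xi}(1)=a(G|M)\int_{\pi\in\Pi_{\disc}(M(F))} d_M(\pi)\,\mu_M(\pi)\,trace(\ii_{G,M}\pi)(f_{\sigma,\xi})\,d\pi.
\]

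Next I would compute the trace in the integrand. By Proposition~\ref{trace of f_phi and phi} (with $\vartheta=\theta$), this trace equals $\sum_{w\in W(\theta,\theta)}trace({}^w\pi)(\phi_{\sigma,\xi})$, and by Theorem~\ref{phi} each summand equals $\xi({}^w\pi)$ if ${}^w\pi\in\Omega(\sigma)$ and vanishes otherwise. Here I would invoke the $W(\theta,\theta)$-invariance of $\xi$ on $\Omega(\sigma)$: implicit in the very definition of this invariance (see Remark~\ref{remark for phi and f}) is that $\Omega(\sigma)$ is stable under $W(\theta,\theta)$, i.e.\ ${}^w\sigma\simeq\sigma\otimes\chi_w$ for some $\chi_w\in\Psi(M)$. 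Consequently, for $\pi\in\Omega(\sigma)$ all $|W(\theta,\theta)|$ terms contribute and each equals $\xi(\pi)$; for $\pi\notin\Omega(\sigma)$, the $W(\theta,\theta)$-stability of $\Omega(\sigma)$ forces ${}^w\pi\notin\Omega(\sigma)$ for every $w$, so the sum is zero. Thus
\[
trace(\ii_{G,M}\pi)(f_{\sigma,\xi}) = |W(\theta,\theta)|\,\xi(\pi)\cdot\mathbbm{1}_{\Omega(\sigma)}(\pi).
\]

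Finally, I would reduce the integral to the stated form. The integration over $\Pi_{\disc}(M(F))$ decomposes into $\Psi_u(M)$-orbits, and only $\Omega(\sigma)\cap\Pi_{\disc}(M(F))$ contributes. Using $d_M(\sigma\otimes\psi)=d_M(\sigma)$ (invariance of formal degree under unramified twists) and the measure transfer $\Psi_u(M)/{\Stab}_{\Psi}(\sigma)\simeq\Omega(\sigma)\cap\Pi_{\disc}(M(F))$ recalled in Section~\ref{def of PM}, I would obtain
\[
f_{\sigma,\xi}(1)=a(G|M)\,d_M(\sigma)\,|W(\theta,\theta)|\int_{\psi\in\Psi_u(M)/{\Stab}_{\Psi}(\sigma)}\mu_M(\sigma\otimes\psi)\,\xi(\sigma\otimes\psi)\,d\psi.
\]
The main obstacle will be the bookkeeping around the $W(\theta,\theta)$-stability of $\Omega(\sigma)$ and the fact that the Plancherel representative can be chosen to be $M$ itself without losing contributions from associate Levis; once these points are settled the computation is a direct substitution into Theorem~\ref{pf}.
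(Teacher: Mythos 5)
Your proof is correct and follows essentially the same route as the paper: apply the Plancherel formula to $f_{\sigma,\xi}$, reduce to the single Levi $M$ and the single orbit $\Omega(\sigma)$ via Proposition~\ref{trace of f_phi and phi} and Theorem~\ref{phi}, use the $W(\theta,\theta)$-invariance of $\xi$ to collapse the inner sum to the factor $|W(\theta,\theta)|$, and finish with the identification $\Pi_{\disc}(M(F))\cap\Omega(\sigma)\simeq\Psi_u(M)/\Stab_\Psi(\sigma)$ together with $d_M(\sigma\otimes\psi)=d_M(\sigma)$. The one point where you are more explicit than the paper is your careful observation that the hypothesis ``$\xi$ is $W(\theta,\theta)$-invariant'' only makes sense if $\Omega(\sigma)$ is $W(\theta,\theta)$-stable (cf.\ Remark~\ref{remark for phi and f}), and your use of this stability to dispatch both the $\pi\in\Omega(\sigma)$ and $\pi\notin\Omega(\sigma)$ cases; the paper glosses over this, citing the invariance together with Theorem~\ref{phi} without spelling out the case split. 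This is a genuine improvement in clarity, not a new idea.
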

\begin{proof}
Similar to the proof of Proposition \ref{formula of phi(1)}, we apply $f_{\sigma, \xi}$ to the Plancherel formula (Theorem \ref{pf}). 
Proposition \ref{trace of f_phi and phi} then yields
\begin{equation} \label{some equal in some pf}
f_{\sigma, \xi}(1) = a(G|M) \int_{\pi \in \Pi_{\disc}(M(F))} d_M(\pi) 
\mu_{M} (\pi) \dsum_{w \in W(\theta, \theta)} trace(\, ^w \pi)(\phi_{\sigma, \xi}) d \pi.
\end{equation}
Due to Theorem \ref{phi}, the isomorphism $\Pi_{\disc}(M(F)) \cap \Omega(\sigma) \s  \Psi_u(M) / {\Stab}_{\Psi}(\sigma),$ and the assumption that $\xi$ is $W(\theta, \theta)$-invariant, the right-hand side of \eqref{some equal in some pf} equals
\[
a(G|M) |W(\theta, \theta)| \int_{\psi \in \Psi_u(M) / {\Stab}_{\Psi}(\sigma)} 
d_M(\sigma \otimes \psi)
\mu_{M} (\sigma \otimes \psi) \xi(\sigma \otimes \psi) d \psi.
\]
Again, using the fact that $d_M(\sigma \otimes \psi)=d_M(\sigma)$ for any unitary character $\psi$ of $M(F),$ the proof is complete.
\end{proof}

\subsection*{Acknowledgements}
This work contains a part of the author's Ph.D. dissertation at Purdue University.
The author is very grateful and indebted to Professor Freydoon Shahidi for his incredible support.  
The author thanks Professors Laurent Clozel, David Goldberg, Gordan Savin, Sug Woo Shin, and Roger Zierau for their helpful comments on this work.
The author also thanks the anonymous referee for a careful reading and many helpful suggestions for improvement in the article. 
This work was partially supported by an AMS-Simons Travel Grant.


\begin{thebibliography}{DKV84}

\bibitem[AC89]{ac89}
James Arthur and Laurent Clozel, \emph{Simple algebras, base change, and the
  advanced theory of the trace formula}, Annals of Mathematics Studies, vol.
  120, Princeton University Press, Princeton, NJ, 1989. \MR{1007299
  (90m:22041)}

\bibitem[Art93]{art93}
James Arthur, \emph{On elliptic tempered characters}, Acta Math. \textbf{171}
  (1993), no.~1, 73--138. \MR{1237898 (94i:22038)}

\bibitem[BDK86]{bdk}
J.~Bernstein, P.~Deligne, and D.~Kazhdan, \emph{Trace {P}aley-{W}iener theorem
  for reductive {$p$}-adic groups}, J. Analyse Math. \textbf{47} (1986),
  180--192. \MR{874050 (88g:22016)}

\bibitem[BZ77]{bz77}
I.~N. Bernstein and A.~V. Zelevinsky, \emph{Induced representations of
  reductive {$p$}-adic groups. {I}}, Ann. Sci. \'Ecole Norm. Sup. (4)
  \textbf{10} (1977), no.~4, 441--472. \MR{0579172 (58 \#28310)}

\bibitem[Clo86]{clo86}
Laurent Clozel, \emph{On limit multiplicities of discrete series
  representations in spaces of automorphic forms}, Invent. Math. \textbf{83}
  (1986), no.~2, 265--284. \MR{818353 (87g:22012)}

\bibitem[Clo91]{clo89}
\bysame, \emph{Invariant harmonic analysis on the {S}chwartz space of a
  reductive {$p$}-adic group}, Harmonic analysis on reductive groups
  ({B}runswick, {ME}, 1989), Progr. Math., vol. 101, Birkh\"auser Boston,
  Boston, MA, 1991, pp.~101--121. \MR{1168480 (93h:22020)}

\bibitem[DKV84]{dkv}
P.~Deligne, D.~Kazhdan, and M.-F. Vign{\'e}ras, \emph{Repr\'esentations des
  alg\`ebres centrales simples {$p$}-adiques}, Travaux en Cours, Hermann,
  Paris, 1984, pp.~33--117. \MR{771672 (86h:11044)}

\bibitem[FLM14]{flm}
Tobias Finis, Erez Lapid, and Werner M\"uller, \emph{Limit multiplicities for
  principal congruence subgroups of {$GL(n)$} and {$SL(n)$}}, preprint;
  arXiv:1208.2257 [math.RT] (2014).

\bibitem[Har11]{harris11}
Michael Harris, \emph{An introduction to the stable trace formula}, On the
  stabilization of the trace formula, Stab. Trace Formula Shimura Var. Arith.
  Appl., vol.~1, Int. Press, Somerville, MA, 2011, pp.~3--47. \MR{2856366}

\bibitem[HC70]{hc162}
Harish-Chandra, \emph{Harmonic analysis on reductive {$p$}-adic groups},
  Lecture Notes in Mathematics, Vol. 162, Springer-Verlag, Berlin, 1970, Notes
  by G. van Dijk. \MR{0414797 (54 \#2889)}

\bibitem[HC73]{hc73}
\bysame, \emph{Harmonic analysis on reductive {$p$}-adic groups}, Amer. Math.
  Soc., Providence, R.I., 1973, pp.~167--192. \MR{0340486 (49 \#5238)}

\bibitem[HC81]{hc81}
\bysame, \emph{A submersion principle and its applications}, Proc. Indian Acad.
  Sci. Math. Sci. \textbf{90} (1981), no.~2, 95--102. \MR{653948 (83h:22031)}

\bibitem[HC84]{hc84}
\bysame, \emph{Collected papers. {V}ol. {IV}}, Springer-Verlag, New York, 1984,
  1970--1983, Edited by V. S. Varadarajan, pages 353--367. \MR{726026
  (85e:01061d)}

\bibitem[HC99]{hculs}
\bysame, \emph{Admissible invariant distributions on reductive {$p$}-adic
  groups}, University Lecture Series, vol.~16, American Mathematical Society,
  Providence, RI, 1999, Preface and notes by Stephen DeBacker and Paul J.
  Sally, Jr. \MR{1702257 (2001b:22015)}

\bibitem[HR10]{hai10}
Thomas~J. Haines and Sean Rostami, \emph{The {S}atake isomorphism for special
  maximal parahoric {H}ecke algebras}, Represent. Theory \textbf{14} (2010),
  264--284. \MR{2602034}

\bibitem[HS12]{hs11}
Kaoru Hiraga and Hiroshi Saito, \emph{On {$L$}-packets for inner forms of
  {$SL_n$}}, Mem. Amer. Math. Soc. \textbf{215} (2012), no.~1013, vi+97.
  \MR{2918491}

\bibitem[Kaz86]{kaz}
David Kazhdan, \emph{Cuspidal geometry of {$p$}-adic groups}, J. Analyse Math.
  \textbf{47} (1986), 1--36. \MR{874042 (88g:22017)}

\bibitem[Kot86]{kot86}
Robert~E. Kottwitz, \emph{Stable trace formula: elliptic singular terms}, Math.
  Ann. \textbf{275} (1986), no.~3, 365--399. \MR{858284 (88d:22027)}

\bibitem[Kot88]{kot88}
\bysame, \emph{Tamagawa numbers}, Ann. of Math. (2) \textbf{127} (1988), no.~3,
  629--646. \MR{942522 (90e:11075)}

\bibitem[Kot05]{kot05}
\bysame, \emph{Harmonic analysis on reductive {$p$}-adic groups and {L}ie
  algebras}, Harmonic analysis, the trace formula, and {S}himura varieties,
  Clay Math. Proc., vol.~4, Amer. Math. Soc., Providence, RI, 2005,
  pp.~393--522. \MR{2192014 (2006m:22016)}

\bibitem[MT11]{mt11}
Allen Moy and Marko Tadi{\'c}, \emph{A construction of elements in the
  {B}ernstein center for quasi-split groups}, Amer. J. Math. \textbf{133}
  (2011), no.~2, 467--518. \MR{2797354}

\bibitem[Roc09]{roc09}
Alan Roche, \emph{The {B}ernstein decomposition and the {B}ernstein centre},
  Ottawa lectures on admissible representations of reductive {$p$}-adic groups,
  Fields Inst. Monogr., vol.~26, Amer. Math. Soc., Providence, RI, 2009,
  pp.~3--52. \MR{2508719 (2010c:22022)}

\bibitem[Sha90]{sh90}
Freydoon Shahidi, \emph{A proof of {L}anglands' conjecture on {P}lancherel
  measures; complementary series for {$p$}-adic groups}, Ann. of Math. (2)
  \textbf{132} (1990), no.~2, 273--330. \MR{1070599 (91m:11095)}

\bibitem[Shi10]{shin10}
Sug~Woo Shin, \emph{A stable trace formula for {I}gusa varieties}, J. Inst.
  Math. Jussieu \textbf{9} (2010), no.~4, 847--895. \MR{2684263}

\bibitem[Sil79]{sil79}
Allan~J. Silberger, \emph{Introduction to harmonic analysis on reductive
  {$p$}-adic groups}, Mathematical Notes, vol.~23, Princeton University Press,
  Princeton, N.J., 1979, Based on lectures by Harish-Chandra at the Institute
  for Advanced Study, 1971--1973. \MR{544991 (81m:22025)}

\bibitem[SS97]{ss}
Peter Schneider and Ulrich Stuhler, \emph{Representation theory and sheaves on
  the {B}ruhat-{T}its building}, Inst. Hautes \'Etudes Sci. Publ. Math. (1997),
  no.~85, 97--191. \MR{1471867 (98m:22023)}

\bibitem[vD72]{dij72}
G.~van Dijk, \emph{Computation of certain induced characters of {$p$}-adic
  groups}, Math. Ann. \textbf{199} (1972), 229--240. \MR{0338277 (49 \#3043)}

\bibitem[Wal03]{wal03}
J.-L. Waldspurger, \emph{La formule de {P}lancherel pour les groupes
  {$p$}-adiques (d'apr\`es {H}arish-{C}handra)}, J. Inst. Math. Jussieu
  \textbf{2} (2003), no.~2, 235--333. \MR{1989693 (2004d:22009)}

\end{thebibliography}
\end{document}